\renewcommand{\theenumi}{\roman{enumi}}
\renewcommand{\p@enumii}{\theenumi--}
\newcommand{\Z}{\ensuremath{\mathbb{Z}}}
\newcommand{\N}{\ensuremath{\mathbb{N}}}
\newcommand{\R}{\ensuremath{\mathbb{R}}} 
\newcommand{\C}{\ensuremath{\mathbb{C}}}
\newcommand{\cS}{\ensuremath{\mathcal{S}}}
\newcommand{\cW}{\ensuremath{\mathcal{W}}}
\newcommand{\eG}{\ensuremath{\textsf{G}}} 
\newcommand{\eH}{\ensuremath{\textsf{H}}}
\newcommand{\eS}{\ensuremath{\textsf{S}}} 
\newcommand{\eX}{\ensuremath{\textsf{X}}}
\newcommand{\eh}{\ensuremath{\textsf{h}}}
 \newcommand{\fe}{\ensuremath{\EuFrak{e}}}
\newcommand{\fr}{\ensuremath{\EuFrak{r}}}
\newcommand{\pa}{\ensuremath{\partial}}
\newcommand{\h}{\ensuremath{\theta}}
\newcommand{\Symp}{\ensuremath{\text{Symp}}} 
\newcommand{\Ham}{\ensuremath{\text{Ham}}}
\begin{document} 

\begin{center} 

{\Large \bf  Special 
Hamiltonian $S^1$-actions on symplectic 4-manifolds} \\ 

\vspace{.3in} 
Mei-Lin Yau 
\footnote{Research partially supported by MOST grant 109-2635-M-008-002. 

{\em Key words and phrases}. Special Hamiltonian $S^1$-action, Maslov condition, exact symplectic 4-manifolds, Chern class, Stein surface.  

{\em Mathematics Subject Classification (2020):}  Primary: 53D20, Secondary: 53D35, 57S15, 53D12. 
} 
\end{center} 

\vspace{.2in} 
\begin{abstract} 

In this paper we consider symplectic 4-manifolds $(M,\omega)$ with 
$c_1(M,\omega)=0$ which admit 
a Hamiltonian $S^1$-action together with a 
Maslov condition on  orbits of the group action. 
We call such spaces {\em special Hamiltonian $S^1$-spaces} (SHam1-spaces) and 
denote them as $(M,\omega, \Phi, h)$ or $(M,\omega, \Phi)$, where 
$\Phi:S^1=\R/2\pi \Z\to Ham(M,\omega)$, $\Phi(t)=\phi_t$, $\phi_0=id_M$, is a group homomorphism, $h$ is the associated moment map. It turns out 
that there are no compact SHam1-spaces. To minimize topological 
complexity 
we assume that $(M,\omega=d\alpha)$ is an exact, connected open tame manifold with connected level sets $h^{-1}(c)$, $c\in \R$,  the gradient vector field $\nabla h$ of the moment map $h$ is 
complete with respect to some $\omega$-compatible Riemannian metric on $M$, and all orbit spaces $h^{-1}(c)/\Phi$ are homeomorphic to a complete manifold. 
We also assume that the SHam1-action on $(M,\omega=d\alpha)$ is effective, {\em semi-free}, and the fixed point set is finite. SHam1-spaces 
with the above conditions are called {\em simple}. 
 We classify all simple SHam1-spaces and 
show that all of these spaces admit the structure of a  Stein surface. 
Moreover, we show that, a SHam1-action $\Phi$ on a 1-connected Stein surface $\cW_n$ is linear near the $A_n$-string of Lagrangian 
spheres, up to a conjugation with a $\Phi$-equivariant diffeomorphism near the $A_n$-string of Lagrangian 
spheres. 
If $n=0,1$ the then linearity of a SHam1-action $\Phi$ can be 
further expanded to larger domains by employing suitable Liouville functions. 

\end{abstract} 
\newtheorem{prop}{Proposition}[section]
\newtheorem{theo}[prop]{Theorem}
\newtheorem{cond}[prop]{Condition}
\newtheorem{defn}[prop]{Definition}
\newtheorem{exam}[prop]{Example}
\newtheorem{lem}[prop]{Lemma}
\newtheorem{cor}[prop]{Corollary}
\newtheorem{ques}[prop]{Question}

\newtheorem{rem}[prop]{Remark}
\newtheorem{notn}[prop]{Notation}
\newtheorem{fact}[prop]{Fact}


\section{Introduction and main results}

Hamiltonian $S^1$-actions on compact symplectic 4-manifolds have been classified by 
Karshon in \cite{Ka} where she proved that all these Hamiltonian $S^1$-spaces are K\"{a}hler. 
In this article we consider a type of Hamiltonian $S^1$-action  $\Phi:S^1\to Ham(M,\omega)$ on a 
symplectic 4-manifold $(M,\omega)$ 
with an extra property called the {\em Maslov condition},  which 
requests that along any nondegenerate orbit $C$, any $\Phi$-invariant Lagrangian subbundle in the 
symplectic normal bundle 
of $C$, treated as a loop of Lagrangian planes  along $C$, has zero Maslov number. 
We call such a group action a {\em special Hamiltonian $S^1$-action} (SHam1-action) and  the triple $(M,\omega, \Phi)$ a 
{\em special Hamiltonian $S^1$-space} (SHam1-space). 
See Definition \ref{sHam} in \S 2 for the precise definition of SHam1-action. To ensure the 
Maslov condition is permitted we assume that $(M,\omega)$ is  
connected  with the first Chern class $c_1(M,\omega)=0$, and that 
the SHam1-action is  effective on $M$. \\

It turns out that the Maslov condition imposes restrictions on the topology of $M$  as well of the $S^1$-action. 
\begin{theo} \label{specialtop} 
Let  $(M,\omega, \Phi)$ be a connected simple SHam1-space 
and $\Phi$ acts effectively on $M$ with moment map $h$, then 
\begin{enumerate} 
\item $M$ is not compact (Proposition \ref{nocpt}), and 
\item the fixed point set $Fix(\Phi)$  is a discrete set of points (Proposition \ref{fixpt}). 
\item Moreover, if   $(M,\omega=d\lambda)$ is exact then 
 $Fix(\Phi)\subset h^{-1}(0)$ up to addition of a constant to $h$ 
 (Proposition \ref{exact}). 
\end{enumerate} 

\end{theo}

Since a SHam1-space $(M,\omega,\Phi)$ is not compact, if we remove 
a codimension $\geq 1$ 
$\Phi$-invariant subset, e.g., any number of nonconstant $\Phi$-orbits, from $M$, then $\Phi$ restricts to a SHam1-action on the  remaining symplectic manifold $(M', \omega)$, so $(M',\omega,\Phi)$  by itself is also a SHam1-space, but the topology of $M'$ can be arbitrarily complicated. To avoid extra topological complexity of $(M,\omega, \Phi)$ beyond 
the presence of a SHam1-action, we  assume that $(M,\omega=d\alpha)$ is {\em simple}, which is defined as follows: 

\begin{defn} 
{\em Let $(M,\omega=d\lambda, \Phi)$ be a connected open exact SHam1-space, and let $h$ denote the moment map associated to  $\Phi$. $(M,\omega=d\lambda, \Phi)$ is {\em simple} if 
\begin{enumerate} 
\item $M$ is a complete manifold with respect to some Riemannian metric, and is homotopic to a finite handlebody, or equivalently, a finite CW-complex, 
\item the gradient vector field $\nabla h$ of the moment map $h$ is 
complete with respect to some $\omega$-compatible $\Phi$-invariant Riemannian metric on $M$, 
\item level sets $h^{-1}(c)$ are connected for all $c\in \R$, and all 
reduced spaces $h^{-1}(c)/\Phi$ are homeomorphic to a noncompact surface without punctures.   
\end{enumerate} 
}
\end{defn}

In this paper we consider the classification problem of all (connected) simple exact SHam1-spaces $(M,\omega=d\lambda, \Phi, h)$ with finite fixed point set $Fix(\Phi)$. {\em All SHam1-sapces considered 
in this manuscript are assumed to be  simple}. \\ 

For the 1-connected case we  
 obtain the following topological classification results.  
 
 \begin{theo} \label{M=An} 
 Let $(M,\omega, \Phi)$ be a 
connected, 1-connected simple exact SHam1-space, and 
the $\Phi$-action is semi-free  with $n+1$ fixed points for some integer  $n\geq 0$. If $n>0$ then $(M,\omega)$  is symplectically the linear plumbing of $n$ cotangent bundles  of spheres  $S_i\subset M$, $i=1,...,n$, 
 where the union $\cup_{i=1}^nS_i$ is an $A_n$-string of 
 $\Phi$-invariant Lagrangian  spheres. 
  If $n=0$  then $(M,\omega)$ is diffeomorphic to the standard 
  symplectic 4-space $(\R^4, \omega_0=\sum_{i=1^2}(dx_i\wedge dy_i)$.

\end{theo}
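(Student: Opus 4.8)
The plan is to use the moment map $h$ as an (essentially Morse) function whose only critical value is $0$ and to reconstruct $(M,\omega,G)$ from its symplectic reductions, in the spirit of Karshon's analysis of the compact case. By Theorem~\ref{specialtop} we may assume $G$ is semi-free, $\Fix(G)=\{p_0,\dots,p_n\}$ is a finite set of isolated points lying in $h^{-1}(0)$, and every level $h^{-1}(c)$ is connected. Semi-freeness forces the isotropy representation on $T_{p_j}M$ to be a sum of two weight-$(\pm 1)$ complex lines, so near $p_j$ there are $G$-equivariant Darboux coordinates in which $h=\tfrac12(\varepsilon_1|z_1|^2+\varepsilon_2|z_2|^2)$ with $\varepsilon_i\in\{\pm1\}$; thus each $p_j$ is a local minimum (both $+$), a local maximum (both $-$), or a saddle (one of each). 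Since $0$ is the only critical value, $h$ is a submersion on $M_\pm:=\{\pm h>0\}$, and by flowing a rescaled $G$-invariant gradient-like vector field one checks that $h^{-1}(\pm\epsilon)\hookrightarrow M_\pm$ are homotopy equivalences and $M=M_-\cup h^{-1}(0)\cup M_+$.

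I would first dispose of the extremal cases. A local-minimum (resp.\ local-maximum) fixed point is an \emph{isolated} point of the subspace $h^{-1}(0)\subset M$, since $h\ge0$ (resp.\ $\le0$) near it with equality only there; as $h^{-1}(0)$ is connected, such a point must be the only point of $h^{-1}(0)$, forcing $n=0$. Then the reductions $h^{-1}(c)/G$ are all $\cong S^2$, no wall-crossing occurs, and $M$ is identified with the standard $\C^2$ (uniqueness of exact fillings of the standard contact $3$-sphere). A single saddle fixed point is likewise the $n=0$ case: here $h^{-1}(0)$ is the cone on a $2$-torus, the reductions are all an open disk, and the same reconstruction, now modelled on $h=\tfrac12(|z_1|^2-|z_2|^2)$, again gives $M\cong\C^2$. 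This settles the $n=0$ assertion and shows that for $n\ge1$ \emph{every} $p_j$ is a saddle.

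Assume now $n\ge1$, all $p_j$ saddles. The quotient $\Sigma:=h^{-1}(0)/G$ is a surface without boundary: off the images $\bar p_j$ the $G$-action on $h^{-1}(0)$ is free with surface quotient, while near $p_j$ the model ($h^{-1}(0)$ is the cone on $T^2$, $G$ acting by the weight-$(1,-1)$ rotation) has quotient the cone on $T^2/G\cong S^1$, i.e.\ a disk with $\bar p_j$ at the centre. So the quotient map $\pi\colon h^{-1}(0)\to\Sigma$ has fibre the single point $p_j$ over $\bar p_j$ and a free $G$-orbit over every other point. Using $M=M_-\cup h^{-1}(0)\cup M_+$, the homotopy equivalences above, the $1$-connectedness of $M$, and non-compactness from Theorem~\ref{specialtop}, a van Kampen/Mayer--Vietoris argument pins $\Sigma$ down to an open disk $\cong\R^2$ carrying $n+1$ distinct marked points $\bar p_0,\dots,\bar p_n$. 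For \emph{any} embedded arc $\gamma\subset\Sigma$ joining two marked points with interior in the free locus, $\pi^{-1}(\gamma)\subset h^{-1}(0)\subset M$ is an embedded $G$-invariant $2$-sphere — two poles (the endpoints) and circles over the interior — and it is Lagrangian because $h^{-1}(0)$ is coisotropic with $G$-orbits as characteristic leaves while $\gamma$ is $1$-dimensional; two such spheres sharing a pole meet transversally there in that single point, as one reads off the quadratic local model. The Maslov condition of Definition~\ref{sHam} enters here, as in Theorem~\ref{specialtop}, to keep these invariant Lagrangian spheres and their induced loops of Lagrangian planes under control. Choosing a system of arcs $\gamma_1,\dots,\gamma_n$ realising a linear chain on the $n+1$ marked points then produces an $A_n$-string $S_1,\dots,S_n$ of $G$-invariant Lagrangian spheres with $\bigcup_{i=1}^n S_i=\pi^{-1}\!\bigl(\bigcup_{i=1}^n\gamma_i\bigr)$.

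It remains to upgrade this to the symplectic statement, i.e.\ that $(M,\omega)$ is a linear plumbing of $n$ copies of $T^*S^2$, a regular neighbourhood of the chain $\bigcup_i S_i$. I would obtain this by a Duistermaat--Heckman reconstruction: the reduced area forms on the surfaces $h^{-1}(c)/G\cong\R^2$ vary linearly in $c$ on $(-\infty,0)$ and on $(0,\infty)$, with the wall-crossing at $c=0$ dictated entirely by the (mutually identical) quadratic models at the $p_j$, so the whole reduction package agrees with that of the $A_n$ Milnor fibre $\{uv+z^{n+1}=\mathrm{const}\}$ under the rotation $u\mapsto e^{-i\theta}u,\ v\mapsto e^{i\theta}v$; a Moser argument carried along the reduction then yields a $G$-equivariant symplectomorphism between them (and for $n=0$ with the standard $\C^2$). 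I expect the crux to be twofold: (i) rigidifying $\Sigma$ to a disk and the sphere configuration to a \emph{linear} $A_n$-chain — as opposed to a cyclic $\widetilde A_n$ or a more complicated tree — purely from $1$-connectedness and non-compactness; and (ii) making the Duistermaat--Heckman/Moser reconstruction rigorous in the non-compact, non-proper setting and verifying that exactness of $\omega$, together with the Maslov condition, removes all cohomological and scaling moduli, so that the output is genuinely the standard linear plumbing.
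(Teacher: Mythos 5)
Your skeleton for the saddle case --- retract $M$ onto $h^{-1}(0)$, identify the reduced space with a disc, lift arcs joining images of fixed points to $G$-invariant Lagrangian spheres meeting transversally at the fixed points --- is essentially the paper's construction (Proposition \ref{morse} together with the proof of Theorem \ref{M=An}). But two steps are genuinely gapped. First, you never use the Maslov condition where it actually does its work. Semi-freeness alone only gives weights $(\pm1,\pm1)$ at an isolated fixed point, and it is the Maslov condition (Proposition \ref{G=eG}: the diagonal circle in $U(2)$ produces $\mu=\pm4$ on invariant Lagrangian planes along nearby orbits) that excludes the equal-sign weights, i.e.\ forces every fixed point to be an index-$2$ saddle modelled on $\eG\subset SU(2)$; local extrema of $h$ at fixed points simply cannot occur in a \emph{special} Hamiltonian $S^1$-space. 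Your substitute disposal of the extremal cases is therefore unnecessary, and as written it is also unsound in this setting: asserting that the reductions are all $S^2$ and invoking uniqueness of exact fillings of $(S^3,\xi_{std})$ presupposes that the sublevels $h^{-1}([0,c])$ are compact fillings, but the moment map here is not proper (the paper stresses this), so those filling theorems do not apply, and they would in any case give a diffeomorphism rather than the required symplectomorphism. The sentence that the Maslov condition ``keeps the spheres under control'' is not a use of the hypothesis.

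Second, the concluding step --- that $(M,\omega)$ is globally the linear plumbing --- is not actually proved: you yourself flag the Duistermaat--Heckman/Moser reconstruction as an unresolved ``crux,'' precisely because of the noncompact, non-proper setting. The paper does not need Duistermaat--Heckman for Theorem \ref{M=An}: it uses the $G$-equivariant Darboux model at each $p_i$ (where $\fr$ becomes $\tfrac12(z_1^2+z_2^2)+\mathrm{const}$) to see that the lifted spheres are smooth at the fixed points and meet transversally there, and then applies the Lagrangian neighborhood theorem together with a cover of the reduced disc $\fr(M)\cong\R^2$ by $1$-connected domains containing the arcs $\gamma_i$, whose preimages are cotangent-bundle neighborhoods of the $S_i$ overlapping exactly in the plumbing regions; the finer identification with $\cW_n$, where a comparison of reduced data would be relevant, is deferred to Theorem \ref{M=Wn} and handled there by extending an equivariant identification of the sphere chains. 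Note also that your ``crux (i)'' is largely a non-issue: the configuration is linear because you are free to \emph{choose} the arcs to form a linear chain (the paper takes a single properly embedded line through all marked points), and the disc-ness of the reduced space is exactly Proposition \ref{morse}(i), deduced from the $1$-connectedness of $M$.
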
  

For $n\in \N\cup\{ 0\}$ let $\cW_n\subset\C^3$ denote the Stein surface defined by the equation 
\[ 
z_1^2+z_2^2+z_3^{n+1}=1.  
\] 
Let $\omega_n$ denote the standard symplectic structure on $\C^3$ restricted to $\cW_n$. It is known that for $n>0$ $\cW_n$ is 
topologically the plumbing of  $n$ copies  of cotangent bundles $T^*S^2$ (of Lagrangian spheres) of type $A_n$ \cite{Wu}, and $\cW_0$ is symplectomorphic to 
the standard symplectic 4-space.

 \begin{prop} \label{sWn} 
  Let $\tilde{\Theta}$ denote  the $S^1$-action on $\C^3$ defined by 
 \[ 
 (z_1,z_2,z_3)\to (z_1\cos\h-z_2\sin\h, z_1\sin\h+z_2\cos\h, z_3), \quad \h\in\R/2\pi\Z. 
 \] 
 The action of $\tilde{\Theta}$ preserves $\cW_n$. We denote by $\Theta$ the induced action on $\cW_n$. $\Theta$  acts on 
 $(\cW_n,\omega_n)$ as a SHam1-action with $n+1$ Fixed points, and $\eh:=x_2y_1-x_1y_2$ is the moment map with $Fix(\Theta)\subset \eh^{-1}(0)$.
  \end{prop}

 \begin{rem} 
Note that $(\cW_n,\omega_n=d\alpha,\Theta^{-1})$ is also a SHam1-space, where $\Theta^{-1}$ is the inverse action of that of $\Theta$. 
More generally, if $(M,\omega, \Phi)$ is a SHam1-space, 
 then so is $(M,\omega, \Phi^{-1})$.  We will show in Proposition 
 \ref{exact} that for any connected, exact, simple effective SHam1-space $(M,\omega, \Phi)$, the weight of the $\Phi$-action 
 at its fixed points are either all equal to 1, or all equal to -1, relative 
 to the Darboux charts at each of the fixed points. 
\end{rem}
 
\begin{defn} \label{equivsymp} 
 Two SHam1-spaces $(M_i,\omega_i, \Phi_i)$, i$=1,2$ are 
symplectically  equivariant if there exists a diffeomorphism $f:M_1\to M_2$ such that $f^*\omega_2=\omega_1$ and $f^*\Phi_2=\Phi_1$. 
$f$ is called an {\em isomorphism} between the two SHam-1 spaces 
as in \cite{Ka}. 
\end{defn} 

\begin{prop} \label{1-equiv} Let $\Theta^{-1}$ denote the inverse 
SHam1-action of $\Theta$ on $(\cW_n,\omega_n)$. Then there exists a Hamiltonian diffeomorphism $f$ of $(\cW_n,\omega_n)$, 
such that $f^{-1}\circ\Theta^{-1}\circ f=\Theta$, i.e., actions of $\Theta$ and $\Theta^{-1}$ on $(\cW_n,\omega_n)$ are equivariant 
up to a conjugation by a Hamiltonian diffeomorphism on $(\cW_n,\omega_n)$. 
In particular, we can take 
\[ 
f(x_1,y_1,x_2,y_2,x_3,y_3):=f(-x_1,-y_1,x_2,y_2,x_3,y_3), 
\] 
then $f=f^{-1}$ and $f$ preserves the Lefschetz fibers of $\Theta$ (hence of $\Theta^{-1})$. See Remark \ref{m_f} and Remark \ref{m=-1} for detail. 
\end{prop}

\begin{theo} \label{M=Wn}  
A connected, 1-connected, simple SHam1-space  $(M,\omega=d\lambda, \Phi)$ with $n+1$ fixed points is  symplectomorphic to the  Stein surface $(\cW_n, \omega_n)$. In  particular, the symplectic 
 topology of $(M,\omega)$ is determined by the cardinality of the 
 fixed point set $Fix(\Phi)$. 
 \end{theo}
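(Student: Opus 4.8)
The plan is to combine the structural classification in Theorem~\ref{M=An} with the model construction in Proposition~\ref{sWn}, and then invoke a uniqueness (rigidity) statement to identify $(M,\omega,G)$ with $(\cW_n,\omega_n,\tilde{\eG})$. First I would observe that by Theorem~\ref{M=An}, if $n=0$ then $(M,\omega)$ is symplectomorphic to the standard symplectic $\R^4$, which is precisely $(\cW_0,\omega_0)$, so that case is immediate. For $n>0$, Theorem~\ref{M=An} gives that $(M,\omega)$ is the linear plumbing of $n$ cotangent bundles of spheres along an $A_n$-string of $G$-invariant Lagrangian spheres $S_1,\dots,S_n$. On the model side, Proposition~\ref{sWn} tells us that $(\cW_n,\omega_n,\tilde{\eG})$ is a connected, $1$-connected exact special Hamiltonian $S^1$-space with exactly $n+1$ fixed points, and the known topology of $\cW_n$ as an $A_n$-plumbing of $T^*S^2$'s means Theorem~\ref{M=An} applies to it as well, producing its own $A_n$-string of $\tilde{\eG}$-invariant Lagrangian spheres.

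The core of the argument is then a rigidity statement: an exact symplectic $4$-manifold obtained as the linear plumbing of $n$ copies of $T^*S^2$ along an $A_n$-configuration of Lagrangian spheres is determined up to symplectomorphism by $n$, and moreover a semi-free Hamiltonian $S^1$-action compatible with such a configuration is unique up to equivariant symplectomorphism. I would establish this inductively on $n$. The base case $n=1$ reduces to the uniqueness of a Weinstein neighborhood of a single Lagrangian sphere together with the completion being $T^*S^2$ (equivalently $\cW_1$), using that $c_1=0$ and exactness pin down the completion. For the inductive step, I would peel off the Lagrangian sphere $S_n$ at the end of the string: a Weinstein neighborhood of $S_n$ is standard, the complement deformation-retracts onto the plumbing of $S_1,\dots,S_{n-1}$, and one glues using the uniqueness of the plumbing region (which is governed by the intersection pattern of the $A_n$-graph and Weinstein's Lagrangian neighborhood theorem). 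Equivariance is carried along at each stage because the $G$-action (resp.\ $\tilde{\eG}$-action) is semi-free with isolated fixed points located along the spheres in a standard rotational model, so the gluing diffeomorphisms can be averaged over $S^1$ to be equivariant; the moment-map normalization $Fix(G)\subset h^{-1}(0)$ and connectedness of level sets from Theorem~\ref{specialtop}(iii) ensure the Hamiltonians match up.

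The main obstacle I anticipate is the rigidity/uniqueness of the plumbing itself, rather than any of the equivariance bookkeeping: while Weinstein neighborhoods of individual Lagrangian spheres are standard, showing that the \emph{global} exact symplectic completion of an $A_n$-plumbing is unique requires controlling the symplectic geometry outside a neighborhood of the skeleton. Here I would use that these spaces are Weinstein (indeed Stein, as asserted in the abstract and consistent with $\cW_n$ being a Stein surface), so the symplectomorphism type is determined by the Weinstein handle decomposition, which in turn is read off from the $A_n$-graph; the classification of Weinstein structures with a given Lagrangian skeleton of this simple plumbing type then yields the result. A secondary, more technical point is verifying that the $G$-invariant Lagrangian spheres $S_i$ produced abstractly by Theorem~\ref{M=An} can be matched, sphere by sphere, with the explicit coordinate spheres in $\cW_n$ coming from $\tilde{\eG}$ in Proposition~\ref{sWn} — this is where I would use that a $G$-invariant Lagrangian sphere meeting the fixed locus in exactly two points has a unique equivariant model, namely the standard rotation on $T^*S^2$. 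Once the symplectomorphism $(M,\omega)\cong(\cW_n,\omega_n)$ is in hand, the final sentence of the theorem — that the symplectic topology depends only on $|Fix(G)|=n+1$ — is an immediate consequence.
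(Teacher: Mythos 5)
Your proposal takes essentially the same route as the paper: both reduce via Theorem \ref{M=An} to the fact that $M$ and $\cW_n$ are $A_n$-plumbings of $T^*S^2$ along $G$- (resp.\ $\tilde{\eG}$-)invariant Lagrangian spheres, and then identify the two plumbings sphere-by-sphere using Darboux charts at the plumbing points and the Weinstein Lagrangian neighborhood theorem. The paper performs this identification directly, by extending an equivariant diffeomorphism of the two $A_n$-strings to a symplectomorphism $M\to\cW_n$, rather than through your induction and appeal to Weinstein-handle rigidity, but the essential content is the same.
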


  Moreover, together with the following theorem by Wu \cite{Wu}, 
 any $A_n$-string of Lagrangian spheres in $(M,\omega)$  
 is Hamiltonian isotopic to any given 
 $A_n$-string of Lagrangian spheres associated to the $\Theta$-action as in Theorem \ref{M=Wn}, up to  a composition of  Lagrangian Dehn twists along the latter:

 \begin{theo}[\cite{Wu}] \label{SympW} 
Any compactly supported symplectomorphism of $\cW_n$ is 
Hamiltonian isotopic to a composition of Dehn twists along 
the standard spheres. In particular, 
$\pi_0(\Symp_c(\cW_n))=Br_{n+1}$.  As a result, 
exact Lagrangians in $A_n$-surface singularities are isotopic to the zero section of a plumbed copy of $T^*S^2$, up 
to a composition of  Lagrangian Dehn twists along the standard 
spheres. 
\end{theo}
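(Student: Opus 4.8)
\emph{Sketch of proof (following \cite{Wu}).} The plan is to present $\cW_n$ as an exact Lefschetz fibration, reduce the computation of $\pi_0(\Symp_c(\cW_n))$ to the combinatorics of the base together with one holomorphic-curve input, and then import the known faithfulness of the braid action for the lower bound. Projecting $\cW_n=\{z_1^2+z_2^2+z_3^{n+1}=1\}\subset\C^3$ onto $z_3$ exhibits $\cW_n$, after a Weinstein deformation that is standard near infinity, as an exact Lefschetz fibration $\pi\colon\cW_n\to\C$ whose regular fibre is the affine conic $\{z_1^2+z_2^2=\mathrm{const}\neq0\}\cong T^*S^1$ and which has $n+1$ nondegenerate critical points with distinct critical values, the roots of $z^{n+1}=1$. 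The standard Lagrangian spheres $S_1,\dots,S_n$ are the matching cycles over the straight segments joining consecutive critical values, and the Dehn twists $\tau_1,\dots,\tau_n$ along them are the images, under a natural homomorphism $\rho\colon Br_{n+1}\to\pi_0(\Symp_c(\cW_n))$, of the standard half-twist generators of $Br_{n+1}$, the mapping class group of the disc with $n+1$ marked points. I note at the outset that $\cW_n$ is simply connected with $H^1_c(\cW_n)\cong H_3(\cW_n)=0$, so a compactly supported symplectic isotopy is automatically compactly supported Hamiltonian; thus the two isotopy notions in the statement agree.

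The technical heart, which I would establish first, is the classification of exact Lagrangian spheres: any exact Lagrangian sphere $L\subset\cW_n$ coincides, after a Hamiltonian isotopy and a composition of the $\tau_j$, with one of the $S_i$. I would turn $L$ into a matching cycle. Choosing an almost complex structure adapted to $\pi$ and stretching the neck along $L$, the compactness theorem of symplectic field theory yields, in the limit, broken holomorphic curves which --- by automatic transversality for immersed curves and the adjunction formula in complex dimension two --- are so sharply constrained that one can isotope $L$ onto a matching cycle over some embedded arc $\gamma\subset\C$ joining two critical values. I then invoke the classical transitivity of the mapping class group of a marked disc, acting on embedded arcs between two marked points up to isotopy rel endpoints, to find a word in the $\tau_j$ carrying $\gamma$, hence $L$, to a standard segment. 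This step already gives the final clause of the theorem on exact Lagrangians in $A_n$-surface singularities, and, iterated, shows that any $A_n$-string of Lagrangian spheres is carried to $S_1\cup\cdots\cup S_n$ by a composition of the $\tau_j$.

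Next I would prove that $\rho$ is surjective, by induction on $n$ with base case $\cW_0\cong(\R^4,\omega_{\mathrm{std}})$, where $\pi_0(\Symp_c)=1$ by Gromov. Given $\phi\in\Symp_c(\cW_n)$, the classification of Lagrangian spheres lets me compose $\phi$ with a word $w$ in the $\tau_j$ so that $(w\circ\phi)(S_i)$ is Hamiltonian isotopic to $S_i$ for every $i$; by the isotopy extension theorem (and $H^1_c=0$) I may arrange $w\circ\phi$ to be the identity on a Weinstein neighbourhood $N$ of the skeleton $\bigcup_iS_i$, which is a deformation retract of $\cW_n$. The residual symplectomorphism is then supported in $\cW_n\setminus N$ and must be shown to lie in the image of $\rho$; this is where the noncompactness is confronted, using the structure of $\pi$ near infinity to reduce to smaller $n$ and ultimately to the base case. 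Granting this, $\rho$ is onto. The braid relations $\tau_i\tau_{i+1}\tau_i=\tau_{i+1}\tau_i\tau_{i+1}$ and $\tau_i\tau_j=\tau_j\tau_i$ for $|i-j|\ge2$ hold from the local models (the $A_2$-plumbing $\cW_2$, and disjoint supports), so $\rho\colon Br_{n+1}\twoheadrightarrow\pi_0(\Symp_c(\cW_n))$ is a well-defined epimorphism.

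Finally I would prove $\rho$ injective: the $\tau_j$ induce, on the derived Fukaya category generated by $S_1,\dots,S_n$, the spherical-twist action of $Br_{n+1}$ shown faithful by Khovanov--Seidel and Seidel--Thomas, and a word in the $\tau_j$ that were Hamiltonian isotopic to the identity would act trivially there, forcing it to be trivial in $Br_{n+1}$. Combining the two directions yields $\pi_0(\Symp_c(\cW_n))\cong Br_{n+1}$, which together with the classification of exact Lagrangian spheres is the assertion of the theorem. I expect the main obstacle to be the first step --- extracting a matching-cycle model for an arbitrary exact Lagrangian sphere from a neck-stretching/holomorphic-curve analysis, and controlling both the compactness and the low-dimensional curve combinatorics that appear; a secondary difficulty is closing the inductive surjectivity argument in the presence of noncompactness while staying within compactly supported Hamiltonian isotopies.
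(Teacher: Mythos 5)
You should first note that the paper does not prove this statement at all: Theorem \ref{SympW} is imported verbatim from Wu \cite{Wu} and is used only as an external input (to conclude that any $A_n$-string of Lagrangian spheres is Hamiltonian isotopic, up to Lagrangian Dehn twists, to the standard one produced in Theorem \ref{M=Wn}). So there is no internal proof to compare your sketch against; it can only be judged against the literature. On that score your outline correctly assembles the known ingredients: the Lefschetz fibration $z_3\colon \cW_n\to\C$ with conic fibres and matching-cycle spheres, the homomorphism $Br_{n+1}\to\pi_0(\Symp_c(\cW_n))$ generated by the Dehn twists, faithfulness of the braid action via Khovanov--Seidel/Seidel--Thomas, Gromov's theorem for the base case $n=0$, and the observation that $H^1_c(\cW_n)=0$ makes compactly supported symplectic and Hamiltonian isotopy agree.

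However, the two steps you yourself call the heart of the argument are exactly where all the content lies, and as written they are not proofs. (a) Converting an arbitrary exact Lagrangian sphere into a matching cycle is a genuinely hard holomorphic-curve theorem (for $A_1$ it is Hind's theorem; for general $n$ it is the main work of \cite{Wu}, which proceeds via a compactification of $\cW_n$ to a rational surface and ball-swapping/pseudoholomorphic-curve techniques rather than a bare appeal to SFT compactness); your sketch gives no mechanism forcing the stretched limits to produce a matching-cycle position. (b) The surjectivity induction has a concrete hole: after arranging $w\circ\phi=\mathrm{id}$ on a Weinstein neighbourhood of the skeleton $\bigcup_i S_i$, the residual symplectomorphism is supported in a collar of the contact boundary (the lens space $L(n+1,n)$), and that region contains no ``smaller $\cW_m$'' to induct on; compactly supported symplectomorphisms of the end are not obviously isotopic to the identity (boundary and fractional twists live precisely there), so this is essentially the original problem restated, not a reduction. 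In the literature surjectivity is Evans's theorem, proved by a quite different route (comparing $\Symp_c(\cW_n)$ with spaces of holomorphic curves and configurations of Lagrangian spheres in a compactification), and Wu combines it with Khovanov--Seidel to get the statement quoted here. So your proposal is a reasonable map of the territory, but the inductive surjectivity step as formulated would fail, and the Lagrangian-classification step is asserted rather than proved.
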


We have the following result concerning 
the uniqueness of special Hamiltonian $S^1$-actions on $\cW_n$.

\begin{theo}(Linearity and symplectic equivariance of SHam1-actions) \label{symp-equiv} 
Let  $\Phi$ be any  
semi-free SHam1-action on $(\cW_n,\omega_n)$ 
with moment map $h$ with $n+1$ fixed points, $n\in\Z$, $n\geq 0$, 
and let $S=S_{\phi}=\cup_{i=1}^nS_i\subset \cW_n$ be the associated $A_n$-string of Lagrangian spheres. Also let $\eS=\eS_\Theta$ denote the $A_n$-string of Lagrangian spheres in $W_n$ associated to $\Theta$.

\begin{enumerate} 
\item 
There is a $\Phi$-invariant open neighborhood $U\subset \cW_n$ of $\eS$ (resp. 
the fixed point $(0,0,1)\in \cW_0$ if $n=0$) and a $\Phi$-equivariant  $C^\infty$-diffeomorphism 
$\fe_\eS:U\to V=\fe_\eS(U)\subset\cW_n$ on which $\Phi=\fe_\eS\circ \Theta\circ \fe_\eS^{-1}$, $\omega':=\fe_\eS^*\omega_n$ is symplectic and $\Phi$-invariant, the $\Phi$-action on $U$  is 
linear with respect to the symplectic form $\omega'$. 
Then by Definition  \ref{equivsymp} $(U,\omega', \Phi, h)$ and $(V, \omega_n, \Theta,\eh)$ are 
symplectically equivariant via the symplectomorphism 
$\fe_\eS:(U,\omega')\to (V,\omega_n)$.

\item If $n=0$, any semi-free SHam1-action $\Phi$ on $(\C^2,\omega)$ with $Fix_\Phi=\{ 0\}$ is 
linear up to conjugation and symplectically equivariant to the standard linear action of $\Theta$ on open 4-ball $B_r(0)=\{ q\in \C^2\mid |q|^2<r\}$ centered at $0$ with any prescribed radius $\sqrt{r}$.

\item If $n=1$,  then 
$\Phi$ is symplectically equivariant to $\Theta$ on any 
compactly supported $\Phi$-invariant open domain containing the $A_1$-sphere $S\cong S^2$  associated to the $\Theta$-action on $\cW_1$. 

\end{enumerate} 

\end{theo}

For the case that $(M,\omega=d\lambda, \Phi, h)$ is not 1-connected we assume that 
$Fix(\Phi)$ is not empty (otherwise the $\Phi$-action is free and the 
Maslov condition is vacuous). Then the orbit space $h^{-1}(0)/\Phi$ 
is topologically an oriented noncontractible surface 
of genus $g\geq 0$ with $b\geq 1$ 
boundary components, $g+b\geq 2$, and 
$k:=|Fix(\Phi)|=n+1\geq 1$ marked points.

\begin{theo}  \label{pi1} 
Let $(M,\omega, \Phi)$ be a  connected 
exact SHam1-space with 
$c_1(M,\omega)=0$  and $n+1$ fixed points, $n\geq 0$. 
Assume also that $M$ is not 1-connected. 
Then $M$ admits the structure of a Stein surface, 
which is topologically 
obtained by attaching $(2g+b-1)$-pairs of  1-  and Lagrangian 2-handlebodies  to the boundary of a 
Stein domain $W_n\subset \cW_n$ diffeomorphic to $\cW_n$. The homology groups of $M$ are 
\[
 H_m(M,\Z)=, 
 \begin{cases}  0 & \text{ for } m \neq 0,1, 2, \\ 
 \Z^{n+2g+b-1}& \text{ for } m= 2, \\ 
\Z^{2g+b-1}  & \text{ for } m=1, \\
\Z & \text{ for } m=0. 
\end{cases} 
\] 
In particular, $H_2(M,\Z)$ is generated by $n$ Lagrangian spheres and 
$2g+b-1$ monotone Lagrangian tori. 
\end{theo}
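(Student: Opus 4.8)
The plan is to build on the 1-connected classification (Theorems \ref{M=An} and \ref{M=Wn}) by analyzing the structure of $M$ through the moment map $h$ and the reduced spaces. Since $(M,\omega=d\lambda)$ is exact and $Fix(G)$ is finite and nonempty, Proposition \ref{exact} gives that $G$ is semi-free, $Fix(G)\subset h^{-1}(0)$, and $h^{-1}(c)$ is connected for all $c$. First I would examine the reduced space $M_{red}=h^{-1}(0)/S^1$ (after a standard symplectic cut or directly as the quotient near the critical level), which, as noted in the lead-up to the theorem, is diffeomorphic to an oriented surface $B$ of genus $g$ with $b\geq 1$ boundary components and $k=n+1$ marked points corresponding to the images of the fixed points. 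The non-1-connectedness of $M$ forces $2g+b\geq 2$, so $H_1(M_{red},\Z)\cong\Z^{2g+b-1}$.

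Next I would extract the Stein handlebody description. The key is to compare $M$ with the 1-connected model $\cW_n$: over a disk $D\subset B$ containing all $n+1$ marked points and carrying no topology, the preimage in $M$ is (by the local structure theory used in Theorems \ref{M=An}--\ref{M=Wn}, applied fiberwise) symplectomorphic to a Stein domain $W_n\subset\cW_n$ diffeomorphic to $\cW_n$. The complement $B\setminus D$ is a surface with boundary that deformation retracts onto a wedge of $2g+b-1$ circles; pulling back the $S^1$-bundle structure over the free locus, each generating circle $\gamma_i$ lifts to a Lagrangian torus $T_i\subset M$ (the preimage of $\gamma_i$, which is Lagrangian because $h$ is constant on it and the $S^1$-orbits are isotropic in the reduced direction), and these tori are monotone by the $c_1=0$ hypothesis together with the Maslov condition along orbits. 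Handle-wise, building $B$ from $D$ by attaching $2g+b-1$ one-handles corresponds, in the total space, to attaching $2g+b-1$ pairs of a $1$-handle and a Lagrangian (Weinstein) $2$-handle along the Legendrian lifts; this is the standard translation between a surface-with-boundary handle decomposition of the base and the Weinstein handle decomposition of an $S^1$-fibered (or symplectic-fibered) total space, so $M$ inherits a Stein structure. I expect the main obstacle to be making precise and rigorous the passage from the $S^1$-equivariant/moment-map picture over the base surface to an honest Weinstein handle decomposition — in particular, controlling the symplectic form near the critical level $h^{-1}(0)$ where the fixed points live and where the $S^1$-bundle degenerates, and checking that the handle attachments can be made in the Stein (not merely smooth) category; this is where I would lean on the local normal-form analysis already developed for the 1-connected case.

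Finally, for the homology computation I would use the handle decomposition directly: $M$ is built from $W_n\simeq\cW_n$ (which contributes $H_2(W_n,\Z)\cong\Z^n$, generated by the $n$ Lagrangian spheres of the $A_n$-string, and is otherwise acyclic) by attaching $2g+b-1$ index-$1$ and $2g+b-1$ index-$2$ handles. Each $1$-handle adds a generator to $H_1$; each Lagrangian $2$-handle is attached along a nullhomotopic-in-$M$ (but not in the $1$-skeleton) Legendrian and its attaching map kills no homology but adds a $2$-cycle, the monotone Lagrangian torus $T_i$. A Mayer–Vietoris or cellular-chain-complex bookkeeping then yields $H_1(M,\Z)\cong\Z^{2g+b-1}\cong H_1(M_{red},\Z)$, $H_2(M,\Z)\cong\Z^n\oplus\Z^{2g+b-1}$ with the stated generators, and $H_m(M,\Z)=0$ for $m>2$ since the Stein surface has the homotopy type of a $2$-complex. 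The identification $H_1(M,\Z)\cong H_1(M_{red},\Z)$ should also follow more conceptually from the long exact sequence of the $S^1$-action (the Gysin-type sequence for $M\to M_{red}$ over the free part), with the fixed points contributing nothing to $H_1$ because they are isolated.
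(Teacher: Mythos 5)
Your overall strategy is the same as the paper's: split the reduced surface $B$ into a disc $D$ containing all $n+1$ marked points (so that $\fr^{-1}(D)$ is a Stein domain $W_n\subset\cW_n$) plus $2g+b-1$ one-handles, lift each base one-handle to a pair consisting of a $4$-dimensional $1$-handle and a Lagrangian $2$-handle, and read the homology off this decomposition with the $A_n$-spheres and the Lagrangian tori as generators. Your homology bookkeeping is essentially the paper's Part 1 and is fine (the paper obtains $H_1(M)\cong H_1(M_{red})$ by retracting $M$ onto $h^{-1}(0)$ along $\pm\nabla h$ and contracting orbits into fixed points, rather than via a Gysin-type sequence, but either route works).

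The genuine gap is in the Stein claim, which you yourself flag as ``the main obstacle'' and then dispose of as ``the standard translation'' between a handle decomposition of the base and a Weinstein handle decomposition of the total space. No such off-the-shelf translation exists here, and it is not contained in the $1$-connected analysis (Theorems \ref{M=An}, \ref{M=Wn}) you propose to lean on: the entire content of the Stein part of Theorem \ref{pi1} is the verification of the hypothesis of Eliashberg's criterion (Theorem \ref{twist}), namely that every lifted $2$-handle is attached along a Legendrian circle whose surgery framing is the contact framing with one left twist added. The paper proves this by direct computation: with the $G$-invariant primitive $\lambda$ one has $\lambda(X_h)=0$ on $h^{-1}(0)$, so the attaching circles are Legendrian for $\lambda$ restricted to the relevant boundary $3$-manifold; the contact plane along them is spanned by the symplectic pair $\{\nabla h,\,X\}$; and one checks that the canonical framing $\pa_{y_1}$ of the cotangent-fibre direction is $-1$ relative to this contact framing, both for the $2$-handles $\eH_{V_i}$ over the arcs $\tau_i$ and for the $2$-handles $\eH_{U_j}$ coming from the base one-handles. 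Relatedly, your sketch leaves implicit how $\fr^{-1}(H_{C_j})$ is actually split into a $1$-handle plus a $2$-handle: the paper takes the $1$-handle along a lift $\gamma_j$ of the core arc $C_j$ inside the Lagrangian annulus $L_j=\fr_0^{-1}(C_j)$, and takes as $2$-handle core the complementary Lagrangian disc $U_j$ obtained by removing a strip around $\gamma_j$ from $L_j$, whose attaching circle runs over the $1$-handle twice with opposite signs (this is also exactly why the $1$-handle generator of $H_1$ survives and the torus class appears in $H_2$, as you assert). Without carrying out this splitting and the framing computation, the statement that $M$ admits a Stein structure is asserted rather than proved.
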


\begin{rem}[Relation with complexity one Hamiltonian space] 
{\rm 
The 4-dimensional exact SHam1-spaces $(M,\omega=d\lambda, \Phi)$ that we consider here are also {\em centered}  complexity one 
spaces as defined in \cite{KaTo1} (see also \cite{KaTo2, KaTo3, KaTo4}) 
except that the moment maps of these spaces are {\em not proper}  unless 
restricted to a $\Phi$-invariant domain $D$ of $M$, so that for any compact 
interval $[a,b]\subset \R$ the preimage 
$h^{-1}([a,b])$ of the moment map is compact. 
} 
\end{rem}

The rest of this paper is organized as follows: In \S\ref{special} we give 
the definition of a SHam1-action on a symplectic 
4-manifold $(M,\omega)$. As an 
example we show that an $S^1$-subgroup of $SU(2)$ acts on  
the standard symplectic 4-space  $(\R^4, \omega_0)$  special Hamiltonianly. Proof of Theorem \ref{specialtop} is given 
in \S\ref{symptop} where we also 
establish various symplectic topological properties of 
SHam1-spaces. 
Theorem \ref{M=An} is proved in \S\ref{1-conn}.  Proposition \ref{sWn} and Theorem \ref{M=Wn} are  verified in \S\ref{Wn} where the $\Theta$-action 
is analyzed and an $A_n$-string of Lagrangian spheres is given. 
This is followed by the proof of Theorem \ref{symp-equiv}.  
Theorem \ref{pi1} is proved in \S\ref{nsimply}. \\


\section{Special Hamiltonian $S^1$-action} \label{special}

\noindent
{\bf Maslov index of a loop of Lagrangian planes}  (see e.g. Ch.2 of \cite{MS}).   \   
Let $L$ be a Lagrangian plane field defined along an oriented  closed curve  
$C$ in a symplectic 4-manifold $(M,\omega)$. Assume that there exists a smooth 
map $\psi:D\to M$ from the unit 2-disc $D$ into $M$ such that $C=\psi(\pa D)$. 
Fix a symplectic trivialization  $\Phi$ of the pullback bundle $\psi^*TM$ over $D$, 
with which the Lagrangian bundle $\psi^*L$ over $\pa D$ can be identified with a 
loop of Lagrangian planes in the symplectic space $\R^4\cong \C^2$ or equivalently, 
the image of a map 
\[ 
\tau:S^1\to \Lambda(\C^2), 
\] 
where $\Lambda(\C^2)$ is 
the space of unoriented Lagrangian planes in $\C^2$. \\ 

The unitary group $U(2)$ acts transitively on  $\Lambda(\C^2)$ with isotropy group $O(2)$, i.e., $\Lambda(\C^2)$ is conjugate to $U(2)/O(2)$. By picking  a unitary basis field  $L$ can be represented by a loop 
(or half of a loop if $L$ not orientable) 
of unitary matrices $A$ in $U(2)$. The Maslov index 
$\mu(\tau)=\mu(L,\psi)$ is then defined to be the degree of the composed map 
\begin{equation} \label{maslov} 
{\det}^2 \circ \tau:S^1\overset{\tau}{\to} U(2)\overset{\det^2}{\to} U(1)\cong S^1. 
\end{equation} 
Several remarks on $\mu(L,\psi)$ are in order: 
\begin{enumerate} 
\item If $M$ is 1-connected then $\mu(L,\psi)$ is always defined since $C$ is the boundary of some disc. 
\item $\mu(L,\psi)$ is independent of the choice of $\psi$. 
\item If $c_1(M,\omega)=0$ Then $\mu(L,\psi)$ is independent of 
the class $[\psi]\in \pi_2(M,C)$  and hence the choice of $\psi$. 
\end{enumerate}

\begin{defn}[{\bf Special Hamiltonian $S^1$-action}] \label{sHam} 
{\rm 
Let $\Phi$ be an $S^1$-group of Hamiltonian diffeomorphisms acting on 
$M$ effectively with nonempty fixed point  set $Fix(\Phi)$. Let $h\in C^\infty(M,\R)$ denote  the 
moment map of $\Phi$, and $X=X_h$ the Hamiltonian vector field on $M$ which 
generates the $\Phi$-action  so that 
\[ 
\omega(X,\cdot)=-dh. 
\] 
$h:M\to \R$ is also called a Hamiltonian function of $(M,\omega)$.  
For $p\not\in Fix(\Phi)$ let $C:=Orb_\Phi(p)$ denote the $\Phi$-orbit of $p$. Take a nonzero $v\in T_pM/T_pC$ so that the plane $X|_p\wedge v$ spanned by  $X|_p$ and 
$v$   is Lagrangian. Extend $X|_p\wedge v$ to an 
oriented  $\Phi$-invariant Lagrangian plane field $L_p$ along $C$ via the $G$-action. The Maslov index $\mu(L_p)$ is independent of the 
choice of nonzero $v\in (T_pM/T_pC)$  with $\omega_p(X,v)=0$. 
Also $\mu(L_p)=\mu(L_q)$ for $q\in Orb_\Phi(p)$. 
We say 
that the $\Phi$ is {\em special} if 
\[ 
\mu(L_p)=0, \quad p\not\in Fix(\Phi) .
\] 
}
\end{defn}

\begin{exam} \label{eG}
{\rm 
Let $M=\R^4=\C^2$ with the standard symplectic structure 
$\omega_0=\sum_{j=1}^2dx_j\wedge dy_j=d\lambda_0$, $\lambda_0=\frac{1}{2}(\sum_{j=1}^2x_jdy_j-y_jdx_j)$ and the standard Euclidean inner product. 
We claim that 
\[ 
\Theta:=\Big\{ \Theta_\h=\begin{pmatrix} \cos \h & -\sin \h\\ \sin \h & \cos\h\end{pmatrix}\Big | \h\in \R/2\pi\Z\Big\}\subset SU(2)  
\]
is a semi-free special Hamiltonian 
$S^1$-group acting on $\C^2$ with 
 $Fix(\Theta)=\{ 0\}$}. \\

 It is easy to see that this $\Theta$-action is semi-free with $Fix(\Theta)=\{ 0\in \C^2\}$, 
 and is Hamiltonian with moment map 
\[ 
\eh:\C^2\to \R, \quad \eh(z_1,z_2)=\Im(z_1\bar{z}_2)=x_2y_1-x_1y_2, 
\] 
the corresponding Hamiltonian vector field is 
\[ 
X=-x_2\pa_{x_1}+x_1\pa_{x_2}-y_2\pa_{y_1}+y_1\pa_{y_2}. 
\]
{\bf Maslov condition.} \ 
We claim that $\Theta$ is special: 
Since the set of nontrivial orbits of $\Theta$ is connected and the Maslov index is discrete, it suffices to show that 
the Maslov condition holds for a typical orbit of $\Theta$, say the loop $C:=Orb_\Theta(p)$ with 
$p=(1,0)\in \C^2$,  
\[ 
C(t)=(\cos t, \sin t)\subset \C^2, t\in \R/2\pi\Z. 
\]  
Then $\dot{C}(0)=X_\eh |_p=\pa_{x_2}$. 
Take $v=-\pa_{x_1}|_p\in T_p\C^2$. Apply $\Theta$ to $X_\eh |_p\wedge v=\pa_{x_2}
\wedge(- \pa_{x_1})|_p$. The resulting Lagrangian plane field along $C$ is 
$L:=\pa_{x_1}\wedge \pa_{x_2}$. The pair $(\pa_{x_1},\pa_{x_2})$ is a unitary 
basis field of the tangent bundle $T\C^2$, also a unitary basis field of $L$.  Therefore 
under the map $\tau$ as in (\ref{maslov}) $L$ can be represented by the constant loop 
$\begin{bmatrix} 1 & 0\\ 0 & 1\end{bmatrix}\subset U(2)$ with constant determinant $1$, hence the Maslov index of $L$ is $\mu(L)=0$. This confirms that the $\Theta$-action on $\C^2$ is special. 
\hfill{$\Box$}
\\ 

\noindent
{\bf Morse index of $\eh$ at $0$.} \ 
Note that $d\eh$ vanishes at $\{ 0\}=Fix(\Theta)$. The Hessian matrix of $\eh$ at the point $0$, with respect to the basis $( \pa_{x_1},\pa_{x_2},\pa_{y_1},\pa_{y_2})$, is 
\[ 
\text{Hess}_0(\eh) = \begin{bmatrix} 0 & 0 & 0 & -1\\ 0 & 0 & 1 & 0 \\ 0 & 1 & 0 & 0 \\ 
-1 & 0 & 0 & 0 \end{bmatrix}, 
\] 
which is nondegenerate with eigenvalues $-1,-1,1,1$, so $\eh$ is a Morse function on $\C^2$, and the Morse index of $\eh$ at $0\in \C^2$ is 2. The corresponding (-1)- and 1-eigenspaces are 
\[ 
E_{-1}=Span\{ \pa_{x_1}+\pa_{y_2}, \ \pa_{y_1}-\pa_{x_2}\}, \quad
E_1=Span\{ \pa_{x_1}-\pa_{y_2}, \ \pa_{y_1}+\pa_{x_2}\}  
\] 
respectively. Also the two 2-dimensional vector spaces $E_{-1}$ and $E_1$ can be identified with 
a pair of complex lines (and hence $\omega_0$-symplectic planes)  in $\C^2$ intersecting transversally 
at $0\in\C^2$: 
\[ 
E_{-1}=\{ z_1+iz_2=0\}, \quad E_1=\{ z_1-iz_2=0\}.  
\] 
\\ 

\noindent
{\bf Stable/unstable submanifolds of $\eh$.} \ 
Let  $\nabla\eh$ 
denote  the gradient vector field of $\eh$ relative to the standard 
Euclidean metric on $\R^4\cong \C^2$, which is both $\omega_0$-compatible and $\eG$-invariant. Then 
\[ 
\nabla \eh=-y_2\pa_{x_1}+x_2\pa_{y_1}+y_1\pa_{x_2}-x_1\pa_{y_2}. 
\] 
Let $W_{\eh}^s(0)$ and $W_{\eh}^u(0)$ denote the stable and 
unstable submanifolds of of the flow of $\nabla\eh$ 
 at $0\in\R^4\cong \C^2$ respectively. 
We claim that  
\[ 
W_{\eh}^s(0)=E_{-1} =\{ z_1+iz_2=0\}, \quad 
W_{\eh}^u(0)=E_1 =\{ z_1-iz_2=0\}. 
\]

Consider the point $(1,i)\in \C^2$ on $E_{-1}$, the integral curve of $\nabla \eh$ 
passing through the point $(1,i)$ at time $t=0$ is 
$\ell_{(1,i)}(t)=(e^{-t}, ie^{-t})\subset E_{-1}$, the half $\R$-space spanned by 
$\nabla \eh(1,i)=-(\pa_{x_1}+\pa_{y_2})$. The $\Theta$-action preserves 
the space of integral curves of $\nabla \eh$. In particular 
$\Theta_{\pi/2}(\ell_{(1,i)})=\ell_{(-i,1)}\subset 
E_{-1}$ is the integral curve of $\nabla \eh$ 
passing through the point $(-i,1)$ at time $t=0$, also a half $\R$-space spanned by 
$\nabla \eh(-i,1)=\pa_{y_1}-\pa_{x_2}\subset E_{-1}$. Moreover, 
$Orb_\Theta(\ell_{(1,i)})=E_{-1}\setminus\{ 0\}$. So $W_{\eh}^s(0)=E_{-1}=\{ z_1+iz_2=0\}$. \\ 

The case for $W_{\eh}^u(0)$ can be verified in a similar way. 
Consider the point $(1,-i)\in \C^2$ on $E_1$, the integral curve of $\nabla \eh$ 
passing through $(1,-i)$ at time $t=0$ is 
$\ell_{(1,-i)}(t)=(e^t, -ie^t)\subset E_1$, the half $\R$-space spanned by 
$\nabla \eh(1,-i)=\pa_{x_1}-\pa_{y_2}$. $\Theta_{\pi/2}(\ell_{(1,-i)})=\ell_{(i,1)}\subset 
E_1$ is the integral curve of $\nabla \eh$ 
passing through the point $(i,1)$ at time $t=0$, also a half $\R$-space spanned by 
$\nabla \eh(i,1)=\pa_{y_1}+\pa_{x_2}\subset E_1$. Moreover, 
$Orb_\Theta(\ell_{(1,-i)})=E_1\setminus\{ 0\}$. So $W_{\eh}^{u(0)}=E_1=\{ z_1-iz_2=0\}$. 
\hfill{$\Box$} 
\end{exam} 

\begin{rem} \label{m_f}
{\rm Observe that $\Theta^{-1}$ 
 is the inverse action of $\Theta$ on $\C^2$, 
with the pair $(-X, -\eh)$ as its corresponding Hamiltonian vector field and moment map instead, and orbits of the $\Theta^{-1}$-action are precisely the orbits of $\Theta$-action but with the opposite orientation. Nevertheless, $\Theta$ and $\Theta^{-1}$ act on $\C^2$ 
symplectically equivariant. For example, consider 
the symplectic diffeomorphism $f_0:\C^2\to \C^2$ given by 
\[ 
f_0(x_1,y_1,x_2,y_2)=(-x_1,-y_1,x_2,y_2), 
\] 
$f_0$ is a linear symplectic map which can be represented by the unitary matrix 
\[ 
\begin{bmatrix} -1 & 0 \\ 0 & 1\end{bmatrix}, 
\] and $f_0=f_0^{-1}$. 
A direct computation 
shows that 
\[ 
f_0^{-1}\circ \Theta^{-1}\circ f_0=\begin{bmatrix} -1 & 0 \\ 0 & 1\end{bmatrix} \begin{bmatrix} \cos\h & \sin\h\\ -\sin\h & \cos \h\end{bmatrix} \begin{bmatrix} -1 & 0 \\ 0 & 1\end{bmatrix}
=\begin{bmatrix} \cos\h & -\sin\h\\ \sin\h & \cos \h\end{bmatrix} 
=\Theta.  
\]  
So $\Theta$ and $\Theta^{-1}$ are equivariant SHam-1 actions on 
$\C^2$. 
}
\hfill{$\Box$}


\end{rem}

\section{Some topological constraints imposed by Maslov condition} 
\label{symptop} 

It turns out that at every isolated fixed point $p\in Fix(\Phi)$ of a special Hamiltonian 
$S^1$-space $(M,\omega, \Phi)$, the $\Phi$-action is Hamiltonian isotopic 
to the standard $\Theta$-action in a Darboux chart of $p$.  
\\ 

\noindent
\begin{prop}  \label{G=eG} 
Let $(M,\omega, \Phi)$ be a SHam1-space 
with moment map $h$, and $p\in Fix(\Phi)$ an isolated fixed point of 
the $\Phi$-action. 

\begin{enumerate}

\item At $p$  the $\Phi$-action induces a linear symplectic $S^1$-action 
on the tangent space $T_pM$ which can be identified with $\Theta\subset SU(2)$ 
with respect to a suitable symplectic Darboux chart of $p\in M$. 

\item There is a $\Phi$-invariant chart $(V, x_1,x_2,y_1,y_2)$ centered at $p$ and $\Phi$-equivariant with respect to a linear action 
of $\Phi$ on $\R^4\cong \C^2$ such that $\omega|_V=\sum_{k=1}^2dx_k\wedge dy_k$.

%
%

\item  All isolated critical points of $h$ are nondegenerate with  Morse index $2$. 

\end{enumerate} 
\end{prop}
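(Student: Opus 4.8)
The plan is to reduce the statement to an explicit linear model near $p$ via an equivariant Darboux argument, then to use the Maslov condition to pin down the two weights of that model, and finally to read off the Hessian of $h$ directly. Parts (i) and (ii) will come out of the first two steps together, and (iii) is then a one-line computation.

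\emph{Step 1: equivariant linear normal form.} I would first invoke the equivariant Darboux theorem: since $G\cong S^1$ is compact and acts by symplectomorphisms fixing $p$, averaging an $\omega$-compatible almost complex structure over $G$ and running the Moser homotopy equivariantly yields a $G$-invariant neighborhood $\cU$ of $p$ and a $G$-equivariant symplectomorphism of $(\cU,\omega,p)$ onto a ball in $(T_pM,\omega_p,0)$, on which $G$ acts by the linearized action $dG_p\subset Sp(4,\R)$. Every compact subgroup of $Sp(4,\R)$ is conjugate inside $Sp(4,\R)$ into $U(2)$, and a circle subgroup of $U(2)$ is conjugate to a maximal torus; hence, after one further symplectic linear change of coordinates $T_pM\cong\C^2$ with $\omega_0=\sum_kdx_k\wedge dy_k$, the action takes the form
\[
\eG^{(a,b)}_\h:(z_1,z_2)\longmapsto\big(e^{ia\h}z_1,\,e^{ib\h}z_2\big),\qquad a,b\in\Z .
\]
Since $p$ is an \emph{isolated} fixed point, the zero-weight subspace $\mathrm{Fix}(dG_p)$ must be $\{0\}$, forcing $a\neq0$ and $b\neq0$. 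Everything then reduces to a computation in this model.

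\emph{Step 2: the Maslov condition forces $b=-a$.} Because $c_1(M,\omega)=0$, the index $\mu(L_q)$ of Definition \ref{sHam} is independent of the spanning disc and so may be computed along any nontrivial orbit $C=Orb_G(q)$ with $q\in\cU$, using a disc lying inside $\cU$. I would take $q=(z_1,z_2)$ with $z_1z_2\neq0$; its stabilizer is generated by $\eG^{(a,b)}_{2\pi/d}$ with $d=\gcd(a,b)$, and this element acts trivially on the model $\C^2$, so $stab_q$ acts trivially on the symplectic normal of $C$ and the $G$-invariant Lagrangian plane field $L$ of Definition \ref{sHam} is genuinely defined along $C$. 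Writing $L_q=A\cdot\R^2$ with $A\in U(2)$ and $X|_q\in L_q$, the $G$-invariant value of $L$ at $\eG^{(a,b)}_\h(q)$ is $\eG^{(a,b)}_\h A\cdot\R^2$, so in the constant trivialization of the model the representing loop is $\tau(\h)=\eG^{(a,b)}_\h A\cdot O(2)$, and multiplicativity of the determinant gives
\[
{\det}^2\!\circ\tau(\h)={\det}^2\!\big(\eG^{(a,b)}_\h\big)\,{\det}^2(A)=e^{2i(a+b)\h}\,{\det}^2(A).
\]
Traversing $C$ once (letting $\h$ run from $0$ to $2\pi/d$), this map has degree $2(a+b)/d$, a nonzero multiple of $a+b$; hence $\mu(L_q)=0$ is equivalent to $a+b=0$. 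Setting $m:=a\in\Z\setminus\{0\}$, the linearized action becomes $\h\mapsto\mathrm{diag}(e^{im\h},e^{-im\h})$, which is conjugate within $SU(2)$ to the standard $\eG^m$; this proves (i), and together with Step 1 it produces the $G$-invariant, $G$-equivariant Darboux chart asserted in (ii).

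\emph{Step 3: the Hessian, and the main obstacle.} In the chart of (ii) the moment map is, up to an additive constant, $h=\tfrac{m}{2}\big(|z_1|^2-|z_2|^2\big)=\tfrac{m}{2}\big(x_1^2+y_1^2-x_2^2-y_2^2\big)$, i.e. the primitive of $-\iota_X\omega_0$ for $X$ the generator of $\eG^m$ (cf. the computation in Example \ref{eG}). Thus $\Hess_p(h)=\mathrm{diag}(m,m,-m,-m)$, nondegenerate because $m\neq0$ and with exactly two negative eigenvalues whatever the sign of $m$, so $h$ is Morse at $p$ of index $2$. As $dh=-\iota_X\omega$ with $\omega$ nondegenerate, the critical set of $h$ is precisely $Fix(G)$, so every isolated critical point of $h$ is an isolated fixed point and the above applies, giving (iii). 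I expect the only genuinely delicate point to be Step 2: one must verify that the $G$-invariant Lagrangian plane field of Definition \ref{sHam} really is defined along the chosen orbit --- hence taking $q$ generic and using that $stab_q$ acts trivially on the symplectic normal --- and one must keep track of orientation and normalization conventions so that $\deg({\det}^2\!\circ\tau)$ comes out as the claimed multiple of $a+b$ (its sign being irrelevant, since only its vanishing is used). The equivariant Darboux theorem and the Hessian computation are otherwise routine.
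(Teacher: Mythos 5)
Your proof is correct and follows essentially the same route as the paper: linearize the action at $p$ via the equivariant Darboux theorem, conjugate the circle into $U(2)$, use the degree of ${\det}^2$ on the $G$-invariant Lagrangian plane field along a nearby orbit to force the linearized action into $SU(2)$ up to a weight $m$, and read off the Hessian of the quadratic moment map to get nondegeneracy and Morse index $2$. If anything, your Step 2 is more complete than the paper's argument, which (after assuming weight $\pm 1$ ``for simplicity'') only explicitly rules out the central circle of $U(2)$ and its inverse by computing $\mu=\pm4$, whereas your formula $\mu=2(a+b)/d$ treats arbitrary weights $(a,b)$ uniformly and shows the Maslov condition is exactly $a+b=0$.
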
 

\begin{proof} 
(i). 
 Parametrize $\Phi$ by $t\in \R/2\pi\Z$ so that $\Phi_0$ is the identity map. Fix $p\in Fix(\Phi)$. By Darboux-Weinstein Theorem we can symplectically identify a 
small open neighborhood $V\subset M$ of $p$ with a open neighborhood of 
the origin $0\in (\R^4,\omega_0)$ of the standard symplectic $\R^4$  via a local diffeomorphism $\phi:(V,\omega)\to (\phi(V)\subset \R^4, \omega_0)$ with $\phi(p)=0$ and $\phi^*\omega_0=\omega$. 
Then the linear map $A_t:=(\Phi_t)_*|_p:T_pM\to T_pM$ 
is symplectic, i.e., $A_t\in Sp(4,\R)$ for all $t$, 
with $A_0$ being the identity map. Since 
$\Phi_t\circ \Phi_s=\Phi_{t+s}=\Phi_s\circ \Phi_t$ for all $s,t$, we have $A_tA_s=A_sA_t$ 
for all $s,t$ as a result. I.e., $A_t$, $t\in \R/2\pi\Z$, is an $S^1$-subgroup of 
$Sp(4,\R)$. Up to conjugation (or equivalently a choice of $\omega_0$-compatible complex structure) we may assume that $A$ is an $S^1$-subgroup of 
the unitary group $U(2)=Sp(4,\R)\cap O(4,\R)$ with respect to the 
standard complex structure $J_0$ on $\R^4\cong \C^2$. 
Now $A$ is either the 
centralizer $S^1$-group $C$ of $U(2)$  or an $S^1$-subgroup of $SU(2)$.  \\ 
 
 If $A=C$ then $A=\Big\{ A_t=\begin{bmatrix} e^{ it} & 0 \\ 0 & e^{ it} \end{bmatrix}\Big | t\in \R/2\pi \Z\Big\}$. The linear $A$-action is 
generated by the Hamiltonian vector field $X^A:=\sum_{k=1}^2(-y_k\pa_{x_k}+x_k\pa_{y_k})$. To check the Maslov condition, pick a 
point  $q\not\in Fix(A)$ say, $q=(-i,0)\in \C^2$ where $X^A_q=\pa_{x_1}$. Let  $v_q=\pa_{x_2}$. The Lagrangian plane field $L=Orb_A(\pm\pa_{x_1}\wedge \pm\pa_{x_2})$ along 
$\gamma:=Orb_A(q)$ is 
$L(t)=(\cos t\pa_{x_1}+\sin t\pa_{y_1})\wedge (\cos t\pa_{x_2}+\sin t\pa_{y_2})$,  $t\in \R/2\pi\Z$. With its unitary basis field  $(\cos t\pa_{x_1}+\sin t\pa_{y_1}, \cos t\pa_{x_2}+\sin t\pa_{y_2})$ 
$L$ can be represented by the loop of unitary matrices 
$\begin{bmatrix} e^{it} & 0\\ 0 & e^{it}\end{bmatrix}$. By (\ref{maslov}) the Maslov index of $L$ is $\mu(L)=4\neq 0$. So 
$A\neq C$. Similarly $A\neq C^{-1}$ as otherwise we would have 
$\mu(Orb_A)(\pa_{x_1}\wedge \pa_{x_2})=-4\neq 0$. 
So $A$ is an $S^1$-subgroup of  $SU(2)$. 
Since all $S^1$-subgroups of $SU(2)$ are conjugate we have 
$A=\Theta$ or $\Theta^{-1}$ up to a change of the complex coordinates of $\C^2$
via a special unitary linear map given by some element of $SU(2)$.  
 \\ 

As the set $Fix(\Phi)$ is discrete without accumulation points we may 
choose for each $p\in Fix(\Phi)$ a symplectic Darboux chart $U_p$ such that 
$U_p\cap U_q=\emptyset$ for $p,q\in Fix(\Phi)$ with $p\neq q$. The above 
result about $A$ applies to every $p\in Fix(\Phi)$ as a result.  
This finishes the proof of (i).  \\

(ii) It follows from (i) that  an equivariant version of Darboux Theorem \cite{DelMel} (see also \cite{GuillStern} Theorem 22.2) applies to every 
isolated fixed point $p\in Fix(\Phi)$. Namely, for each isolated fixed point 
$p\in Fix(\Phi)$, there exists a $\Phi$-invariant chart $V$ centered at  $p\in M$  and  $\Phi$-equivariant with respect to a linear action of $\Phi$  on $\R^4$ so that  
$\omega|_V=\omega_0$ is the standard symplectic form 
$\omega_0=\sum_{k=1}^2dx_k\wedge dy_k$ on $\R^4$, and $f^{-1}\circ \Phi\circ f=\Theta\subset  SU(2)$. \\

 For (iii), observe that $dh|_p=-\omega(X_h|_p,\cdot )$ for $p\in M$. Since $\omega$ 
is nondegenerate, $\omega(X_h|_p,\cdot )=0$ iff $X_h$ vanishes at $p$, i.e., iff 
$p\in Fix(\Phi)$, i.e.,  $Crit(h)=Fix(\Phi)$. By (ii) we may 
assume that $\Phi=\Theta$ and hence $h=\eh$ near each isolated critical point 
$p\in Fix(\Phi)$ of $h$, therefore all isolated critical  points of $h$ are nondegenerate 
with Morse index $2$. 
\end{proof}

Forgetting the Maslov condition, 4-dimensional SHam1-spaces are  by themselves (not necessarily compact) {\em Hamiltonian $S^1$-spaces}. Recall  that 4-dimensional compact  Hamiltonian $S^1$-spaces
have been classified by Karshon \cite{Ka}: 

\begin{theo}[\cite{Ka}, Theorem 6.3]  
Every compact 4-dimensional Hamiltonian $S^1$-space 
$M$  can be obtained 
by a sequence of $S^1$-equivariant symplectic blow-ups  from one of the 
 following two types of "minimal model spaces" at their fixed points: 
\begin{enumerate} 
\item the complex projective plane $\C P^2$ or a Hirzebruch 
surface, with a symplectic form that comes from a K\"{a}hler form, if $M$ 
has at most one fixed surface (a symplectic sphere); or 
\item a ruled manifold, i.e. a smooth manifold 
which is topologically an $S^2$-bundle over a closed surface, with a compatible $S^1$-action 
that fixes the base surfaces and rotates the fibers (\cite{Ka} Definition 6.13, Lemma 6.15).  A ruled manifold admits a compatible 
K\"{a}hler structure, has two fixed surfaces and no interior 
fixed points .
\end{enumerate} 
\end{theo}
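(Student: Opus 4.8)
The plan is to prove this by following Karshon's strategy: attach to every compact $4$-dimensional Hamiltonian $S^1$-space a decorated graph which is a complete invariant, and then reduce an arbitrary such space to one of the two model types by $S^1$-equivariant blow-downs. First I would analyze the moment map $h:M\to\R$ as a Morse--Bott function. Its critical set is $\Fix(G)$, and in dimension $4$ each connected component of $\Fix(G)$ is either an isolated point, of even Morse index $0$, $2$ or $4$, or a symplectic surface fixed pointwise whose complex-line normal bundle is rotated by $G$, of index $0$ (at $\min h$) or $2$ (at $\max h$); moreover $\min h$ and $\max h$ are each attained on a single component. For a regular value $t$ the reduced space $M_t:=h^{-1}(t)/S^1$ is a closed symplectic orbifold of real dimension $2$, i.e.\ a surface with finitely many cone points coming from nontrivial stabilizers. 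As $t$ increases across a critical value the pair (diffeomorphism type of $M_t$, total symplectic area of $M_t$) changes in a controlled way dictated by the local model at that critical component: crossing an interior isolated fixed point of index $2$ is a blow-up/blow-down of $M_t$ at a point, crossing a fixed surface at an extreme value creates or destroys that component, and crossing an isolated extremal fixed point creates or destroys $M_t$ itself.

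Next I would record the local data. By the equivariant Darboux/slice theorem already invoked in Proposition~\ref{G=eG}, near an isolated fixed point $M$ is $G$-equivariantly symplectomorphic to a neighbourhood of $0$ in $\C^2$ with a weighted rotation $(z_1,z_2)\mapsto(e^{im_1\h}z_1,e^{im_2\h}z_2)$, and near a fixed surface $\Sigma$ it is the disc bundle of a complex line bundle over $\Sigma$ with fibrewise rotation. To each critical component one assigns a vertex decorated by the value of $h$, by the isotropy weights (and, for a surface, by the genus and the symplectic area), and one joins vertices lying in the closure of a common family of orbits with a prescribed finite stabilizer. This decorated graph is the proposed invariant. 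Running an equivariant Moser argument interval by interval over the regular values of $h$ — where $h^{-1}([a,b])$ fibres over the reduced space as a trivial $S^1$-bundle away from the surgeries — one shows that two Hamiltonian $S^1$-spaces with isomorphic decorated graphs are $S^1$-equivariantly symplectomorphic, and conversely that every abstract graph satisfying the obvious local compatibility conditions is realized.

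Finally I would reduce to the minimal models. An interior isolated fixed point of index $2$ lies on an embedded $G$-invariant symplectic sphere of self-intersection $-1$ — visible as an exceptional curve in the surgery relating the reduced spaces just below and just above its critical level — and one can successively blow down $S^1$-equivariantly along such spheres (chosen appropriately) so that after finitely many steps the graph has no interior isolated fixed point left. In the reduced graph there is then either at most one fixed surface, and a short combinatorial inspection shows the only possibilities are the graphs of $\C\dP^2$ and of the Hirzebruch surfaces, each carrying its standard toric Kähler form; or exactly two fixed surfaces, necessarily one at $\min h$ and one at $\max h$, which forces $M$ to be an $S^2$-bundle over a closed surface with the fibre-rotating action, i.e.\ a ruled manifold, realized as the projectivization of a rank-$2$ complex vector bundle and hence Kähler.

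I expect the main obstacle to be the uniqueness half of the graph classification: piecing the local normal forms together along the moment intervals while keeping track of the $S^1$-bundle structure together with its blow-up surgeries over the reduced surfaces, and controlling the cohomological area parameters, is precisely where the argument is delicate. The secondary point requiring care is the combinatorial step of verifying that the only graphs with no interior isolated fixed points are exactly the graphs of $\C\dP^2$, the Hirzebruch surfaces, and the ruled manifolds listed in (i) and (ii).
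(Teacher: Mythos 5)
The paper does not prove this statement at all: it is Theorem 6.3 of Karshon's memoir \cite{Ka}, quoted verbatim as background, so there is no in-paper argument to compare yours against. Your sketch is essentially Karshon's own strategy --- Morse--Bott analysis of the moment map, reduced spaces changing by blow-up/blow-down across critical levels, the decorated graph (weights, genus, areas, moment values) as a complete invariant via an interval-by-interval equivariant Moser argument, and equivariant blow-downs to a minimal model --- so as an outline of where the cited result comes from it is on target, and the step you single out as delicate (uniqueness of the space given the graph) is indeed the technical heart of \cite{Ka}.

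One concrete inaccuracy in your reduction step, however: it is not true that every interior isolated fixed point of index $2$ lies on a $G$-invariant symplectic sphere of self-intersection $-1$, and minimality cannot be characterized as ``no interior isolated fixed points remain.'' The standard circle action on $\C\dP^2$ (a generic subcircle of the toric action) has three isolated fixed points, one of them interior of index $2$, yet $\C\dP^2$ has no homology class of square $-1$; Hirzebruch surfaces acted on with only isolated fixed points likewise sit among your type (i) minimal models and have interior fixed points. So the induction as you state it would terminate too late or not at all: its intended endpoints themselves contain interior isolated fixed points. Karshon's actual criterion for non-minimality is the presence of an invariant exceptional sphere, which is read off from the decorated graph as a specific local configuration of fixed points and isotropy weights (e.g.\ two fixed points joined by a $\Z_k$-sphere with the appropriate weights, or an interior point suitably adjacent to an extremal component); the blow-down induction runs on that criterion, and only afterwards does the combinatorial inspection identify the remaining graphs with those of $\C\dP^2$, the Hirzebruch surfaces, and the ruled manifolds. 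With that correction your plan is a faithful, if compressed, account of the proof in \cite{Ka}.
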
  

Recall that if $(M,\omega,\Phi)$ is a  4-dimensional compact Hamiltonian $S^1$-space then each connected component of $Fix(\Phi)$ is either a single point or a symplectic surface, and the maximum and minimum of the moment map $h$ is each attained on exactly one component of $Fix(\Phi)$ (see  e.g. \cite{Ka} Appendix A). \\ 

The following two propositions together verify Theorem \ref{specialtop}.  

\begin{prop} \label{nocpt} 
A  closed compact symplectic 4-manifold $(M,\omega)$  does not admit a 
SHam1-action. 
\end{prop}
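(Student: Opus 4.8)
The plan is to argue by contradiction. Suppose $(M,\omega)$ is closed and carries a special Hamiltonian $S^1$-action $G$ with moment map $h$. Since $M$ is compact, $h$ attains its maximum, and as recalled above every connected component of $Fix(G)$ is a point or a symplectic surface, the maximum of $h$ being attained on exactly one such component $F$. I will rule out both possibilities for $F$.

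Suppose first $F=\{p\}$ is a point. Then $p$ is an isolated point of $Fix(G)$, hence an isolated critical point of $h$ (recall $Crit(h)=Fix(G)$, as observed in the proof of Proposition \ref{G=eG}). By Proposition \ref{G=eG}(iii) such a point is a nondegenerate critical point of $h$ of Morse index $2$. On the other hand $p$ is a (global, hence local) maximum of $h$, so its Hessian is negative definite and its Morse index equals $\dim M=4$. This is absurd.

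Suppose next $F=\Sigma$ is a symplectic surface. First I would set up the local model: since $\Sigma$ is a symplectic submanifold fixed pointwise by $G$, the induced $G$-action on its symplectic normal bundle $\pi\colon N\Sigma\to\Sigma$ is fibrewise linear and symplectic, so $N\Sigma$ carries a $G$-invariant compatible complex structure turning it into a Hermitian line bundle on whose fibres $G$ acts by $z\mapsto e^{ik\h}z$ for some $k\in\Z\setminus\{0\}$; here $k\ne 0$ because $\Sigma$ is a whole connected component of $Fix(G)$, of codimension $2$, so its normal $G$-representation has no trivial summand. By the equivariant version of Weinstein's symplectic neighbourhood theorem, a $G$-invariant neighbourhood of $\Sigma$ in $M$ is $G$-equivariantly symplectomorphic to a neighbourhood of the zero section of $N\Sigma$. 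Now pick a point $p$ in a normal fibre, close to but off the zero section; its orbit $C=Orb_G(p)$ is a circle contained in that fibre and bounding a small disc $D$ inside the fibre. Carrying out the construction of Definition \ref{sHam} in this linear model (a computation of exactly the kind already done in Example \ref{eG} and in the case $A=C$ of the proof of Proposition \ref{G=eG}), one finds that a nonzero $v$ with $X|_p\wedge v$ Lagrangian may be taken horizontal, i.e.\ in the summand identified with $T_{\pi(p)}\Sigma$, and that the resulting $G$-invariant Lagrangian plane field $L_p$ along $C$, written in a symplectic trivialization of $TM$ over $D$ adapted to the splitting $T_pM=(\text{vertical})\oplus(\text{horizontal})$, is the loop of unitary frames $\h\mapsto\operatorname{diag}\!\big(e^{ik\h},1\big)$, $\h\in\R/2\pi\Z$. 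Hence by (\ref{maslov}), $\mu(L_p)=\deg({\det}^2\!\circ\tau)=2k\ne 0$, contradicting the Maslov condition in the definition of a special Hamiltonian $S^1$-action. So no closed $(M,\omega)$ can admit such an action.

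The point-case above is essentially immediate from Proposition \ref{G=eG}. I expect the surface case to be the main obstacle: one must genuinely establish the equivariant symplectic model near $\Sigma$ and a bona fide symplectic trivialization of $TM$ over the capping disc $D$ in the normal fibre, so that the loop of Lagrangian planes is legitimately represented by $\operatorname{diag}(e^{ik\h},1)$ and its Maslov number is indeed $2k$; the independence of $\mu(L_p)$ from the choices of $v$ and $D$ then follows from the invariance properties recorded just after (\ref{maslov}) together with the standing hypothesis $c_1(M,\omega)=0$.
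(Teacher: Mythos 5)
Your proof is correct, but it takes a genuinely different route from the paper's. The paper also begins with the observation that isolated fixed points have Morse index $2$ (so neither extremum of $h$ can be attained at an isolated fixed point), but it then invokes Karshon's classification \cite{Ka} to conclude that $M$ would have to be obtained from a ruled manifold by equivariant blow-ups at non-minimal fixed points, and derives the contradiction by a Maslov (and $c_1$) computation along an orbit inside an $S^2$-fiber near the minimal fixed surface. You instead bypass Karshon entirely: you look at the fixed component realizing the maximum of $h$, kill the point case by the index-$2$ statement of Proposition \ref{G=eG}(iii), and kill the surface case by the local weight-$k$ computation in an equivariant model near $\Sigma$, giving Maslov index $2k\neq 0$. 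That surface computation is essentially the argument the paper uses separately in Proposition \ref{fixpt} to prove discreteness of $Fix(G)$; so your approach is more self-contained and local (its only external input is the equivariant Darboux/Weinstein neighborhood theorem) and yields the key step of Proposition \ref{fixpt} as a byproduct, whereas the paper's approach trades that local work for the heavier global input of Karshon's classification. The one point to be careful about — which you correctly flag — is that the trivialization argument is cleanest if you run the computation in an exactly linear $G$-invariant Darboux chart centered at a point of $\Sigma$ (weights $(k,0)$, orbit and capping disc contained in the chart), exactly as in the paper's Proposition \ref{fixpt}, rather than in the normal-bundle model where the vertical/horizontal splitting is only $\omega$-orthogonal along the zero section; with that adjustment the loop of Lagrangian planes is literally $\theta\mapsto\operatorname{diag}(e^{ik\theta},1)$ and $\mu=2k\neq 0$ as you claim.
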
 

\begin{proof} 
Assume in contrary that $(M,\omega)$  allows a special SHam1-action $\Phi:S^1\to Ham(M,\omega)$. We may assume that the action is effective. Let 
 $h$ denote its moment map.  Then 
 $Fix(\Phi)=Crit(h)$.  $M$ is compact, $h$ is bounded, and the sets 
 $M_-:=\{ p\in M\mid h(p)=\min h\}$ and $M_+:=\{ p\in M\mid h(p)=\max h\}$ are both nonempty. Since every isolated fixed point 
 of the $\Phi$-action has Morse index 2, the set $M_-$ contains no connected component with isolated fixed points, and neither does $M_+$. Then both $M_-$ and $M_+$ are connected compact closed symplectic surfaces and, following \cite{Ka}, $M$ can be obtained from  a {\em ruled manifold}  with two fixed surfaces  and 
 no interior fixed points by a  sequence of equivalent blowups at 
 fixed points that are not minima for the moment map of the ruled manifold. \\

 Suppose that $M$ is a ruled manifold, then 
the orbit $\gamma_q:=Orb_G(q)$ of $q\not\in Fix(G)$ is contained some 
symplectic $S^2$-fiber  
$S$ of $M$. But $c_1(M)([S])=S\cdot S +e(S)=0+2\neq 0$, $c_1(M.\omega)\neq 0$. Indeed, let $F_p$ denote the $S^2$ fiber over  $p\in M_-$. Fix any $q\in F_p\setminus (M_+\cup M_-)$. $Orb_\Phi(q)$ divides 
$F_p\cong S^2$ into two discs $D_-\cup D_+$ with $D_-\cap M_-=\{p\}$. 
We may identity a $\Phi$-invariant symplectic Darboux neighborhood of $p\in M$ 
as $D'\times D_\epsilon\subset \C_{z_1}\times \C_{z_2}$ so that 
$D_-\subset D'\times \{ 0\}$, and $D_\epsilon$ is a coordinate chart of $F_p$ centered at $p$. Up to a choice of orientation of $\gamma_q$ we 
may assume that the tangent vector field along $\gamma_q$ is $X:=
-y_1\pa_{x_1}+x_1\pa_{y_1}$. Let $v:=\pa_{x_2}|_{\gamma_q}$ be 
the vector field $\pa_{x_2}$ restricted to $\gamma_q$. Then 
$L:=X\wedge v$ is a Lagrangian plane field along $\gamma_q$ 
with Maslov index $\mu(L,D_-)=2$. On the other hand, if we choose 
$D_+$ as the disc bounded by $\gamma_q$, then the orientation of 
$\gamma_q$ as the boundary of $D_+$ is given by $-X$, and the 
corresponding Maslov index becomes $\mu(L=(-X)\wedge v, D_+)=-2$. 
Either way the Maslov condition is not satisfied. So $M$ cannot be a ruled manifold, nor 
can it  be obtained from any ruled manifold by a sequence of equivalent blowups at fixed points that are not minima for the moment map of the ruled manifold. 
We conclude that a 4-dimensional compact symplectic 
 manifold $(M,\omega)$  does not admit an effective special Hamiltonian $S^1$-action. 
\end{proof}

\begin{prop}[Fixed point set is discrete]  \label{fixpt} 
 Let  $(M,\omega)$ be a connected noncompact symplectic 4-manifold. Suppose that there is 
 a special Hamiltonian $S^1$-action $\Phi:S^1\to Ham(M,\omega)$ on $M$ 
 with contractible orbits and nonempty fixed point set $Fix(\Phi)$, then $Fix(\Phi)$ consists of isolated points. 
 \end{prop}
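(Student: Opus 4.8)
The plan is to argue by contradiction. Suppose a connected component $N$ of $Fix(G)$ has positive dimension. Averaging an $\omega$-compatible almost complex structure over the compact group $G$ gives a $G$-invariant $\omega$-compatible $J$; then $G$ preserves $J$, so at each fixed point the isotropy representation on the tangent space is complex linear and $N$ is a $J$-holomorphic, hence symplectic, submanifold of $M$. Thus $\dim N\in\{2,4\}$. The case $\dim N=4$ is immediate: such an $N$ is open in $M$, while $Fix(G)=\bigcap_\theta Fix(G_\theta)$ is closed, so the component $N$ is also closed, and connectedness of $M$ forces $N=M$, contradicting that $G$ acts effectively (in particular nontrivially). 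So the whole content of the proposition is to rule out a fixed symplectic surface, and this is exactly where the Maslov condition is needed.

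Assume then that $\Sigma:=N$ is a connected symplectic surface contained in $Fix(G)$, and fix a point $p\in\Sigma$. By the equivariant Darboux theorem quoted in the proof of Proposition \ref{G=eG} (which applies verbatim at any fixed point, not only at isolated ones), there is a $G$-invariant chart $(\cU,x_1,x_2,y_1,y_2)$ centered at $p$ with $\omega|_\cU=dx_1\wedge dy_1+dx_2\wedge dy_2$, with $\Sigma\cap\cU=\{z_2=0\}$, and with the $G$-action of the form $G_\theta(z_1,z_2)=(z_1,e^{im\theta}z_2)$ for a fixed integer weight $m$; here $m\neq0$, since otherwise the normal directions would be fixed too and $\dim Fix(G)$ would be $4$ near $p$. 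In this chart the generating Hamiltonian vector field is $X=m(-y_2\pa_{x_2}+x_2\pa_{y_2})$, vanishing exactly on $\Sigma\cap\cU$.

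Now pick a small $\epsilon>0$ and the point $q=(0,\epsilon)\in\cU\setminus Fix(G)$, with orbit $C=Orb_G(q)=\{0\}\times\{|z_2|=\epsilon\}$. This circle bounds the obvious disc in $\cU$, and since $C$ is contractible in $M$ (hypothesis) and $c_1(M,\omega)=0$, the Maslov index of any $G$-invariant Lagrangian plane field along $C$ appearing in Definition \ref{sHam} may be computed from this local disc. At $q$ the vector $X|_q$ is a nonzero multiple of $\pa_{y_2}$, so $v:=\pa_{x_1}$ makes $X|_q\wedge v$ Lagrangian; moreover $\pa_{x_1}$ is $G$-invariant (the $z_1$-direction is fixed), so $L_q:=Orb_G(X|_q\wedge v)$ is an oriented $G$-invariant Lagrangian plane field along $C$. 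Transporting $(\pa_{x_1},\pa_{y_2})$ by the action and normalizing shows that $L_q$ is represented along $C$ by the loop of unitary matrices $\theta\mapsto\begin{bmatrix}1&0\\0&ie^{im\theta}\end{bmatrix}$, whose $\det^2$ equals $-e^{2im\theta}$; running once around $C$ therefore yields $\mu(L_q)=2\,\mathrm{sign}(m)=\pm2\neq0$ (it is $\pm2|m|$ if one instead traverses the orbit $|m|$ times). This contradicts $G$ being special. Hence $Fix(G)$ has no $2$-dimensional component; combined with the first paragraph, every component is a single point, so the closed submanifold $Fix(G)$ is a discrete set of points.

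I expect the only step requiring genuine care to be the identification of the local equivariant model in the second paragraph --- in particular, confirming that the tangent directions of $\Sigma$ are pointwise $G$-fixed (so that the transverse vector $v=\pa_{x_1}$ may be chosen $G$-invariant) and that the weight $m$ is nonzero --- after which the degree computation is a word-for-word repetition of the $\det^2$ bookkeeping already carried out in Example \ref{eG} and in the proof of Proposition \ref{G=eG}. If one prefers to avoid the normal form altogether, the Maslov index depends only on the homotopy class of the loop of Lagrangian planes, so it can be read off directly from the linear isotropy representation on $T_pM$ together with any $G$-invariant compatible $J$, giving the same value $\pm2$.
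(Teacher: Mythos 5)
Your proposal is correct and follows essentially the same route as the paper: rule out a fixed symplectic surface $\Sigma$ by taking a small $G$-orbit in the normal disc direction at a point of $\Sigma$, extending a Lagrangian plane spanned by $X$ and a fixed tangent direction of $\Sigma$, and computing a nonzero Maslov index ($\pm 2$, or $\pm 2|m|$ with the orbit-map parametrization, versus the paper's $\pm 2n$ --- the discrepancy is only the bookkeeping of how many times the orbit is traversed, and is immaterial since both are nonzero). The only cosmetic difference is that you re-derive the fact that fixed components are points or symplectic surfaces via an averaged invariant $J$ and dispose of the $4$-dimensional case by effectiveness, whereas the paper simply cites Karshon's Appendix A.
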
 
 
 \begin{proof}

 Let $\Phi:S^1\to Ham(M,\omega)$ be a semi-free SHam1-action on $(M,\omega)$. Denote by $X$  the 
 Hamiltonian vector field on $M$ which generates the $\Phi$-action, and 
 $h:M\to \R$ the corresponding moment map defined by 
 $\omega(X,\cdot )=-dh$. By assumption all $\Phi$-orbits are contractable and $Fix(\Phi)$ is not empty. It is known that 
every connected component of $Fix(\Phi)$ is either a single point or a 
symplectic surface (\cite{Ka}, Appendix A). 
If $Fix(\Phi)$ contains  a connected symplectic 
surface $\Sigma$ then $h$ is constant on $\Sigma$  and 
$h(\Sigma)$ is a local extremum of $h$. \\ 

Fix a point $p\in \Sigma$ and an open neighborhood $U_p\subset \Sigma$ of $p$ so that $\Phi$ acts nontrivially near $U_p$ except on $U_p$. 
There is a $\Phi$-invariant tubular neighborhood of $U_p$ 
which is symplectomorphic to a trivial disc bundle $E\cong D\times U_p$ over $U_p$ on which $\Phi$ acts nontrivially as rotations 
along fibers of $E$, fixing $U_p$ pointwise. Since $\Sigma$ 
is symplectic we may identify $E=D\times U_p$ with 
$D_1\times D_2\subset \C\times \C$ with $U_p=\{ 0\} \times D_2$ 
being  the base disc centered at $p=(0,0)$, and $D_1\times \{ pt\}$ as 
fibers, with $\Phi$ acts $D_1\times D_2$ by $t\cdot (z_1,z_2)=(e^{\pm int}z_1,z_2)$, $t\in \Phi\cong \R/2\pi\Z$. \\ 

Let $\gamma$ be a $\Phi$-orbit in the disc fiber over $p$ and $D_\gamma\subset D_1\times \{ 0\}$ the fiber disc over $p$ bounding 
$\gamma$. Assume that $h(\Sigma)$ is a local minimum of $h$ then 
$h=\frac{1}{2}|z_1|^2+constant$ on $E$ and 
the tangent vector field of $\gamma$ is $X=-y_1\pa_{x_1}+x_1\pa_{y_1}$. Let $v:=\pa_{x_2}|_\gamma$.  Then 
$L:=X\wedge v$ is a loop of Lagrangian planes along $\gamma$ 
with maslov index $\mu(L, D_\gamma)=2\neq 0$. On the other hand, 
if $h(\Sigma)$ is a local maximum of $h$ then 
$h=-\frac{1}{2}|z_1|^2+constant$ on $E$ and 
the tangent vector field of $\gamma$ is $X=y_1\pa_{x_1}-x_1\pa_{y_1}$. In this case $\mu(L,D_\gamma)=-2\neq 0$. Either way the Maslov condition is not met. 
So $Fix(\Phi)$ does not 
contain any  symplectic surface.  
We conclude that $Fix(\Phi)$ is a discrete set of points. 
\end{proof}


\begin{prop} \label{exact} 
Let $(M,\omega=d\lambda, \Phi)$ be a connected,  exact, simple SHam1-space,  and let $\lambda$ be a $\Phi$-invariant primitive 1-form of $\omega$. Then 
$Fix(\Phi)\subset h^{-1}(0)$ up to addition of a constant to $h$, 
\end{prop} 

\begin{proof} 
Let $X$ be the Hamiltonian vector field of the moment map $h$ associated to the $\Phi$-action. 
Observe that
\begin{equation} 
-dh=d\lambda(X_h,\cdot)=L_{X_h}\lambda-d(\lambda(X_h))=-d(\lambda(X_h))
\end{equation} 
with $L_{X_h}\lambda=0$
since $\lambda$ is $\Phi$-invariant, so 
$\lambda(X_h)-h$ is  a constant. Up to an addition of a 
constant to $h$ we may assume that 
\begin{equation} \label{Xh0} 
h=\lambda(X_h) \quad \text{and hence \quad $\lambda(X_h)=0$ on $h^{-1}(0)$},  
\end{equation} 
then 
\[ 
Fix(\Phi)
\subset h^{-1}(0)
\] 
 since $\lambda(X_h)$ is constant on level 
sets of $h$.  \end{proof}


\section{Simply-connected Exact Simple SHam1-spaces} \label{1-conn} 

The rest of this paper will focus on the 
classification of  connected exact simple SHam1-spaces. 
We assume that {\em $Fix(\Phi)$ is a nonempty finite set} as before. 
Let 
\[ 
M_c:=\{ h^{-1}(c)\} , \quad \fr_c=\fr|_{h^{-1}(c)}:h^{-1}(c)/\Phi ,  \quad c\in \R. 
\]

\begin{prop} \label{morse} 
Let $(M,\omega=d\lambda, \Phi)$ be a connected, 1-connected 
exact  simple SHam1-space with $c_1(M,\omega)=0$, $\lambda$ being 
$\Phi$-invariant,  
$\Phi$ acts semi-freely on $M$ with moment map $h:M\to \R$, 
and $Fix(\Phi)\subset h^{-1}(0)$ is a finite set. Then the followings are true. 

\begin{enumerate} 

 \item $h^{-1}(c)/\Phi$ is 1-connected hence homeomorphic to a disc. 
\item 
 $\lambda(X_h)=h$ on $M$. Moreover $d\lambda(X_h,\cdot)=0$ 
on $h^{-1}(0)$. In particular, for $q\in h^{-1}(0)\setminus Fix(\Phi)$, the symplectic normal space $T^\omega_q(h^{-1}(0)):=\{ v\in T_qM\mid \omega_q(v, u)=0, \  
\forall u\in T_q(h^{-1}(0))\}$ is generated by $(X_h)_q$.  
\end{enumerate} 
\end{prop}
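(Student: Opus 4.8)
The plan is to dispatch (ii) by a direct computation and then obtain (i) from a $\pi_1$-computation combined with exactness.

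\emph{Part (ii).} I would start from $G$-invariance of $\lambda$. Cartan's formula gives $0=L_{X_h}\lambda=d(\lambda(X_h))+\iota_{X_h}d\lambda=d(\lambda(X_h))-dh$, so $\lambda(X_h)-h$ is constant on the connected manifold $M$; evaluating at any $p\in Fix(G)$, where $(X_h)_p=0$ and, by hypothesis, $h(p)=0$, forces the constant to be $0$, i.e. $\lambda(X_h)=h$ on $M$, and in particular $\lambda(X_h)\equiv 0$ along $h^{-1}(0)$. Restricting along $\iota\colon h^{-1}(0)\hookrightarrow M$ gives $\iota^*(\iota_{X_h}d\lambda)=\iota^*(-dh)=-d(h\circ\iota)=0$, which is exactly the statement that $d\lambda(X_h,\cdot)$ vanishes on vectors tangent to $h^{-1}(0)$. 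Finally, for $q\in h^{-1}(0)\setminus Fix(G)$ the point $q$ is regular for $h$ (as $Crit(h)=Fix(G)$), so near $q$ the level $h^{-1}(0)$ is a smooth hypersurface with $T_q(h^{-1}(0))=\ker dh_q$; its $\omega$-orthogonal $T^\omega_q(h^{-1}(0))$ is one-dimensional, and since $\omega_q((X_h)_q,u)=-dh_q(u)=0$ for all tangent $u$ while $(X_h)_q\neq0$, it must equal $\R\,(X_h)_q$.

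\emph{Part (i).} Write $Fix(G)=Crit(h)=\{p_1,\dots,p_k\}$ ($k\ge 1$, finite), a set of nondegenerate index-$2$ critical points on $h^{-1}(0)$ (Proposition~\ref{G=eG}), and set $\bar p_i:=\fr_0(p_i)\in M_{red}:=h^{-1}(0)/G$. By Proposition~\ref{exact}, $M$ deformation retracts onto $h^{-1}(0)$, so $h^{-1}(0)$ is $1$-connected. By the equivariant Darboux model of Proposition~\ref{G=eG}(ii), near each $p_i$ there are complex coordinates $(w_1,w_2)$ with $G$ acting semifreely by $(w_1,w_2)\mapsto(e^{-i\theta}w_1,e^{i\theta}w_2)$ and $h=\tfrac12(|w_2|^2-|w_1|^2)$; from this model $h^{-1}(0)\setminus Fix(G)$ is a smooth hypersurface, $\fr_0$ restricts on it to a principal $S^1$-bundle onto $M_{red}\setminus\{\bar p_1,\dots,\bar p_k\}$, and near $\bar p_i$ the quotient is identified with $\C$ via the invariant $u=w_1w_2$, so $M_{red}$ is a boundaryless smooth surface. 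Then any based loop in $M_{red}$ can be homotoped off the finite set $\{\bar p_i\}$ and lifted through the circle bundle $\fr_0$ to a loop in $h^{-1}(0)\setminus Fix(G)\subset h^{-1}(0)$; that lift is null-homotopic in $h^{-1}(0)$, hence the original loop is null-homotopic in $M_{red}$. So $\pi_1(M_{red})=1$, and $M_{red}$ is diffeomorphic to $S^2$ or to $\R^2$.

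To rule out $S^2$ I would use exactness. Assume $M_{red}\cong S^2$. By (ii), $\iota_0^*\lambda$ annihilates $X_h$ and $\iota_0^*(\iota_{X_h}d\lambda)=0$ along $h^{-1}(0)$, so $\iota_0^*\lambda$ descends to a $1$-form $\bar\lambda$ on $M_{red}$ with $d\bar\lambda=\omega_{red}$, where $\omega_{red}$ is an area form on $M_{red}\setminus\{\bar p_i\}$. In the coordinate $u=w_1w_2$ near $\bar p_i$ one finds $\omega_{red}=\tfrac12\,d|u|\wedge d(\arg u)$ and $\bar\lambda=\tfrac12|u|\,d(\arg u)$; hence $\omega_{red}$ is a nonnegative density on $S^2$ with finite positive total mass, and $\bar\lambda$ is bounded near each $\bar p_i$ with $\int_{\{|u|=\delta\}}\bar\lambda=\pi\delta\to0$. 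Running Stokes on $S^2\setminus\bigsqcup_i\{|u_i|\le\delta\}$ and letting $\delta\to0$ would give $\int_{S^2}\omega_{red}=0$, contradicting $\int_{S^2}\omega_{red}>0$. Therefore $M_{red}\cong\R^2$, an open disc. For $c\neq0$ the free regular level $h^{-1}(c)$ quotients to a boundaryless surface, and since each fixed point contributes only a semifree weight-$(\pm1)$ reduction — whose reduced space is $\cong\C$ for all nearby moment values by the local model — the diffeomorphism type of $h^{-1}(c)/G$ is unchanged across $c=0$; so $h^{-1}(c)/G\cong\R^2$ for every $c$.

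\emph{Anticipated main obstacle.} The exclusion of $S^2$ is the delicate part: it rests on a Stokes argument at the \emph{critical} value $0$, where $\bar\lambda$ and $\omega_{red}$ are not smooth at the $\bar p_i$, so one must work on the complement of shrinking discs and use the explicit model $u=w_1w_2$ both to kill the boundary terms and to see that $\omega_{red}$ stays integrable with positive total mass; the same model also powers the "no change across $c=0$" claim. A secondary technical point is that the retraction onto $h^{-1}(0)$ and the connectedness of level sets should be read over an exhausting family of $G$-invariant bounded domains on which $h$ is proper (cf. the Remark following Theorem~\ref{pi1}), since $h$ need not itself be proper.
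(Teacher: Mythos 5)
Your proposal is correct, and its skeleton matches the paper's: part (ii) is proved exactly as in the paper (Cartan's formula plus $G$-invariance of $\lambda$ gives $\lambda(X_h)-h$ constant, killed by evaluating at a fixed point, and the $\omega$-orthogonal complement of a regular level is then spanned by $X_h$), and for part (i) both you and the paper retract $M$ onto $h^{-1}(0)$ by the gradient flow, pass 1-connectedness to the quotient through the singular fibration $\fr_0$, and then transport the conclusion to the other levels. Where you genuinely diverge is in finishing (i): the paper stops at "$h^{-1}(c)/G$ is 1-connected" and identifies the quotients at different levels by a gradient-flow homeomorphism, leaving the step from 1-connectedness to "disc" (i.e.\ the exclusion of $S^2$) implicit, whereas you supply an explicit argument — descending $\lambda|_{h^{-1}(0)}$ to a primitive $\bar\lambda$ of $\omega_{red}$ on the complement of the branch points and running Stokes with the local model $u=w_1w_2$ to show a compact reduced space would have zero area. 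That extra step is a real addition (it is exactly where exactness of $\omega$ and the normalization $\lambda(X_h)=0$ on $h^{-1}(0)$ get used), and it buys a complete justification of the "disc" claim; the paper's gradient-flow identification of $h^{-1}(c)/G$ with $h^{-1}(0)/G$ is, on the other hand, a cleaner way to handle all $c$ simultaneously than your local-model "no change across $c=0$" remark, which should be combined with that flow to be fully rigorous. Two small points to tidy: the formula $\bar\lambda=\tfrac12|u|\,d(\arg u)$ holds for the standard primitive in the equivariant Darboux chart, and the actual $\lambda$ may differ from it by $df$ with $f$ a $G$-invariant function, so your boundary integrals acquire a term $\int d\bar f=0$ — harmless, but worth saying; and a loop in $M_{red}$ lifts through the principal $S^1$-bundle only to a path, which you must close up by an arc in the fiber before invoking 1-connectedness of $h^{-1}(0)$. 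Neither affects the validity of the argument.
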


\noindent
{\em Proof.} {\em (i)} 
Recall that $h^{-1}(c)$ is connected for all  $c$. Observe that 
the gradient flows of $\pm\nabla h$ induce a deformation retract from 
$M$  to $h^{-1}(0)$, keeping $h^{-1}(0)$ fixed all the time. So  $M$ is 
homotopic to $h^{-1}(0)$. Since $M$ is 1-connected, so is $h^{-1}(0)$. 
$\fr_0:h^{-1}(0)\to h^{-1}(0)/\Phi$ is a singlar fibration over $h^{-1}(0)/\Phi$, 
hence $h^{-1}(0)/\Phi$ is 1-connected. Moreover, the gradient flows of $\pm\nabla h$  induce a homeomorphism between the quotient 
spaces $h^{-1}(c)/\Phi$  and 
$h^{-1}(0)/\Phi$ for all  $c\in \R$, so $h^{-1}(c)/\Phi$ is 1-connected 
for all $c$ provided that $M$ is 1-connected. 
\\

\noindent
{\em (ii)}
 $\lambda$ is a $\Phi$-invariant primitive 1-form of $\omega$, hence 
$0=L_{X_h}\lambda=d(\lambda(X_h))+d\lambda(X_h,\cdot)=d(\lambda(X_h))-dh$. 
Since $d(\lambda(X_h))-dh=0$ on $h^{-1}(0)$, $\lambda(X_h)$ is constant on $h^{-1}(0)$. $h^{-1}(0)$ is connected and $\lambda(X_h)=0$ at $p\in Fix(\Phi)$ so 
$\lambda(X_h)=0$ and hence $d\lambda (X_h,\cdot)=0$ on $h^{-1}(0)$.
Therefore $(X_h)_q\in T^\omega_q(h^{-1}(0))$ all  $q\in h^{-1}(0)\setminus Fix(\Phi)$. Since $\dim (T^\omega_q(h^{-1}(0)) +\dim T_q(h^{-1}(0))=\dim 
T_qM=4$, $X_h \subset T(h^{-1}(0))$  generates the field of symplectic normal spaces of $T(h^{-1}(0)\setminus Fix(\Phi))$. 
\hfill{$\Box$}

\begin{rem} 
{\rm 
  Property (ii) above implies that for any embedded smooth curve $\gamma\subset M_0/\Phi$, its 
preimage $L_\gamma:=\fr_0^{-1}(\gamma)\subset h^{-1}(0)$ is an immersed 
monotone Lagrangian surface of $(M,\omega)$. 
For example, if $\gamma$ is a circle missing the  set $(Fix(\Phi)$ then 
$L_\gamma$ is an embedded monotone Lagrangian torus. If  
$\gamma$ is a circle which meets the  set $\fr_0(Fix(\Phi))$ at exactly one point, then $L_\gamma$ is a Lagrangian Whitney sphere, i.e., an immersed 
monotone Lagrangian torus with one transversal self-intersection point. 
If $\gamma$ is an embedded arc which connects  exactly two distinct points 
in $\fr_0(Fix(\Phi))$ and at its endpoint, then $L_\gamma$ is an embedded 
Lagrangian sphere. 
}
\end{rem}

It turns out that a 1-connected exact SHam1-space $(M,\omega, \Phi)$ with finite 
$Fix(\Phi)$  is symplectically the plumbing of a 
finite number of  $T^*S^2$. \\ 

\noindent 
{\bf Proof of Theorem \ref{M=An}.}\

Let $\gamma:(-\infty,\infty)\to h^{-1}(0)/\Phi\cong \C$ be a smoothly 
embedded curve so that the complement $(h^{-1}(0)/\Phi)\setminus \gamma$ consists of two connected components, and 
$\gamma(t_i)=p_i$ for some 
$0=t_0<t_1<t_2<\cdots<t_n<\infty$, where $\{ p_0,p_1,\dots, p_n\}=Fix(\Phi)$. Let 
$\gamma_i:=\gamma([t_{i-1},t_i])$, 
$\gamma_0=\gamma((-\infty,0])$, $\gamma_{n+1}=
\gamma([t_n,\infty))$. 
Let $S_i:=\fr_0^{-1}(\gamma_i)\subset h^{-1}(0)$, then by Proposition \ref{morse} (ii) $S_i$ is a $\Phi$-invariant Lagrangian sphere which is smooth except perhaps at the two points $p_{i-1}$ and $p_i$. \\

Let $U_i\subset M$ be a $\Phi$-invariant symplectic Darboux chart  centered at $p_i$, $U_i\cap U_j=\emptyset$ for  $i\neq j$. Let $U:=\cup_{i=0}^nU_i$. 
By shrinking $U_i$ if necessary 
we may assume that $\Phi=\Theta$ on $U$. 
 Then $h=\eh$ and $X_h=X_\eh$ on $U$. Each $U_i$ is symplectically identified with a open neighborhood 
of $0\in\C^2$ with $p_i$ identified with the origin $0\in\C^2$. 
Recall 
 $\fr_0=\fr|_{h^{-1}(0)}/\Phi$. 
We may assume that $\fr(z_1,z_2)=\frac{1}{2}(z_1^2+z_2^2)+\gamma(t_i)$ when restricted to  each $U_i$, and, by perturbing $\gamma$ if necessary, 
that  $\fr_0^{-1}(\gamma_i)\cap (U_{i-1}\cap h^{-1}(0))=\{y_1=0=y_2\}\cap U_{i-1}$ and $\fr_0^{-1}(\gamma_i)\cap (U_i\cap h^{-1}(0))=\{x_1=0=x_2\}\cap U_i$ are Lagrangian discs for each $i$. 
Then 
\[ 
S_i:=\fr_0^{-1}(\gamma_i), \quad i=1,...,n, 
\] 
 are smoothly 
embedded Lagrangian spheres, 
\[ 
S_i\pitchfork S_{i+1}=\{ p_i\} \ \text{ for } i=1,...,n-1, \quad 
S_i\cap S_j=\emptyset \  \text{ if } |i-j|\neq 1. 
\] 
$S_i$ can be oriented so that the intersection number is $S_i\cdot S_{i+1}=1$ for $i=1,...,n-1$. 
The union $\cS:=\cup_{i=1}^n
S_i$  is an $A_n$-string of Lagrangian spheres. \\

By Lagrangian Neighborhood Theorem \cite{MS,W}, a tubular neighborhood $V_i\subset M$ 
of the Lagrangian sphere $S_i$ is symplectomorphic to 
a tubular neighborhood of the 0-section of the cotangent 
bundle $(T^*S_i,\omega_{can})$ of $S_i$ equipped with the canonical 
symplectic structure. $T^*S_i$  and $T^* S_{i+1}$  are plumbed together near $p_i$ so that 
tangent and cotangent spaces of $S_i$ are identified 
with the cotangent and tangent spaces of $S_{i+1}$ respectively for 
$i=1,...,n-1$: 
\[ 
T_{p_i}S_i=\pa_{x_1}\wedge \pa_{x_2}=T^*_{p_i}S_{i+1}, \quad 
T^*_{p_i}S_i=\pa_{y_1}\wedge \pa_{y_2}=T_{p_i}S_{i+1},  
\]
with $T^*S_i$ and $T^*S_{i+1}$ plumbed near $p_i$ via the identification $T_{p_i}S_i\cong 
T^*_{p_i}S_{i+1}$ with the following correspondence on local coordinates 
at $p_i$: 
\[ 
T^*_{p_i}S_i \ni (x,y) \to (-y,x)\in T^*_{p_i}(S_{i+1}), 
\] 
where $x=(x_1,x_2)$ are coordinates of  $S_i$ near $p_i$, $y=(y_1,y_2)$ are the fiber coordinates of the cotangent space $T^*_{p_i}S_{i}$. 
\\

Indeed we can take an open cover of $\fr_0=h^{-1}(0)/\Phi\cong \R^2$ by 
1-connected domains $V_i\subset \fr_0$, $i=1,\cdots,n$ so that $\fr_0=\cup_i V_i$, 
$\gamma_i\subset V_i$, $V_i\cap V_j=\emptyset$ if $|i-j|>1$, and $V_i\cap V_{i+1}$
is 1-connected. 
Then each $\fr^{-1}(V_i)$ is symplectomorphic to 
an open neighborhood in $T^*S_i$ of the zero section $S_i$ of $T^*S_i$, 
and $V_i\cap V_{i+1}$ is 
where the plumbing of $T^*S_i$ and $T^*S_{i+1}$ takes place. 
This completes the proof of Theorem \ref{M=An} if $|Fix(\Phi)|=n+1\geq 2$. \\ 

If $|Fix(\Phi)|=1$ then $M$ is diffeomorphic to a symplectic 4-space, and we may take 
the fixed point to be $0\in \R^4$. 
\hfill{$\Box$}
\\

\section{A special Hamiltonian $S^1$-action on $\cW_n$} \label{Wn}

We start with a verification of Proposition \ref{sWn}: \\ 

For each  $n\in \N\cup\{ 0\}$ consider the Stein surface in $\C^3$ 
given by 
\[ 
\cW_n: z_1^2+z_2^2+z_3^{n+1}=1, \quad (z_1,z_2,z_3)\in \C^3. 
\]   

The standard SHam1-action $\Theta$ on $\C^2$  can be lifted to a SHam1-action on $\C^3$ which acts trivially on the $z_3$-coordinate: 
\[
\tilde{\Theta}:=\Big\{ \tilde{\Theta}_\h=\begin{pmatrix} \cos \h & -\sin\h & 0 \\ 
\sin\h & \cos\h & 0 \\ 0 & 0 & 1\end{pmatrix}\mid \h\in\R/2\pi\Z\Big\}\subset SU(3).  
\] 
The action of $\tilde{\Theta}$ preserves $\cW_n$ and restricts to a special 
Hamiltonian $S^1$-action on $\cW_n$, denoted as $\Theta$ for simplicity,  with moment map 
\[ 
\eh=\Im(z_1\bar{z}_2)=x_2y_1-x_1y_2 \in C^\infty(\C^3), 
\] 
the corresponding Hamiltonian vector field is 
\[ 
X=-x_2\pa_{x_1}+x_1\pa_{x_2}-y_2\pa_{y_1}+y_1\pa_{y_2}, 
\] 
and the fixed point set is 
\[ 
Fix(\Theta)=\{ p_k:=(0,0,\xi^k) | k=0,...,n\}\subset \eh^{-1}(0), 
\quad \xi:=e^\frac{2\pi i}{n+1}. 
\] 

The tangent space of $\cW_n$ at $p_k$ is $T_{p_k}\cW_n=\C^2_{z_1z_2}$ for each $k$. Since $\eh$ is independent of $z_3$, the Morse index of $\eh$  at $p_k$ is $2$ as in Example \ref{eG}. 
It follows that the $\Theta$-action on $\cW_n$ is special Hamiltonian.  
This completes the proof of Proposition \ref{sWn}. 
\\ 

\begin{rem} \label{m=-1} 
{\em (Equivarience between the $\Theta$-action  and its inverse  $\Theta^{-1}$ on $(\cW_n,\omega_n)$.)} \\

{\rm On $(\cW_n,\omega_n)$ consider the symplectic diffeomorphism 
$f:\cW_n\to \cW_n\subset \C^3$ 
\[ 
f(x_1,y_1,x_2,y_2,x_3,y_3)=(-x_1,-y_1,x_2,y_2,x_3,y_3). 
\] 
$f$ can be represented by the unitary matrix 
\[ 
\begin{bmatrix} -1 & 0 & 0 \\ 0 & 1 & 0\\ 
0 & 0 & 1\end{bmatrix}, \quad f^{-1}=f. 
\]

Then 
\[ 
f^{-1}\circ \Theta^{-1}\circ f=\begin{bmatrix} -1 & 0  & 0 \\ 0 & 1 & 0 \\ 0 & 0 & 1\end{bmatrix} \begin{bmatrix} \cos\h & \sin\h & 0 \\ -\sin\h & \cos \h & 0 \\ 0 & 0 & 1 \end{bmatrix} 
\begin{bmatrix} -1 & 0 & 0  \\ 0 & 1 & 0 \\ 0 & 0 & 1\end{bmatrix}
\] 
\[ 
=\begin{bmatrix} \cos\h & -\sin\h & 0 \\ \sin\h & \cos \h & 0 \\ 0 & 0 & 1\end{bmatrix} 
=\Theta.   
\]  
So $\Theta$ and $\Theta^{-1}$ are symplectically equivariant SHam-1 actions on 
$\cW_n$. }
\hfill{$\Box$}
\end{rem} 

Consider the standard projection 
\[ 
 \pi':\cW_n\subset \C^3\to \C ,\quad \pi'(z_1,z_2,z_3)=z_3  \quad \text{ restricted to }\cW_n. 
 \] 
Observe that the value of $z_3$ is constant along any integral curve $\ell$ of $\nabla\eh$ and hence on the cylinder $Orb_\Theta(\ell)$ degenerated or  not. So for $z\in \C_{z_3}$ the preimage 
$C_z:=(\pi')^{-1}(z)$ is a $\Theta$-invariant cylinder degenerate only at 
point $p_k=(0,0,\xi^k)$ if $z=\xi^k$ for some $k=0,1,\dots,n$, 
with tangent space 
$T_pC_z=span\{ X,\nabla\eh\}$ when $p\neq p_k$. 
\\

Consider the level set  $\eh^{-1}(0)$. Without loss of generality, 
we may restrict to a $\Theta$-invariant Stein domain 
\[
W_n\subset \cW_n, \quad W_n=(\pi')^{-1}(D)\cap h^{-1}([-c,c]) 
\] 
for some $c>0$, $D=\{ |z|\leq r\}$ for some $r>1$ and $Fix(\Theta)\subset D$. \\ 

On $W_n$ consider the Riemann surface 
\[ 
\Sigma=\{ z_2=0\} \cap W_n\subset\eh^{-1}(0)
\] 
which contains $Fix(\Theta)$ and is transversal to 
$\Theta$-orbits except at $Fix(\Theta)$. 
Let $\Sigma_\h:=\Theta_\h(\Sigma)$, then  
$\Sigma_{\h+\pi}=\Sigma_\h$ for each $\h$, and $\Sigma_{\h}\cap \Sigma_{\h'}=Fix(\Theta)$ if  $\h\neq \h'\mod \pi$. 
Moreover, 
\[ 
\eh^{-1}(0)=\cup_{\h\in\R/\pi\Z}\Sigma_\h. 
\]

For each $\h$  
the symplectic reduction map  $\fr_0|_{\Sigma_\h}:\Sigma_\h\to \C$
  is a $2:1$
 branched covering map branching at $Fix(\Theta)$, 
 with branching locus $\{ \xi\in \C\mid \xi^{n+1}=1\}\subset \C$. 
 Let  $g$ and  $b$ be the genus and boundary components of $\Sigma$ 
(hence each $\Sigma_\h$), then $2-2g-b=2-(n+1)$. 
Observe that the subgroup $K:=\{ \pm Id\}\subset \Theta$ acts on $\pa\Sigma$, $\pa\Sigma/K$ is a simple closed circle which is diffeomorphic to $\pa D$ under the map $\fr_0$.  Therefore $b=1$ or $2$, 
$\Sigma$ is a surface of genus 
$g=\lfloor \frac{n}{2}\rfloor$ with $b=(n+1)-2g$ boundary components.
More explicitly, 
\[ 
(g,b)=\begin{cases} (m, 1) & \text{ if $n=2m\geq 0$ is  even}, \\ 
(m, 2) & \text{ if $n=2m+1\geq  1$ is odd}. 
\end{cases} 
\]

To see the $A_n$-string of Lagrangian spheres in $W_n\subset\cW_n$, 
let 
\begin{equation} \label{stdS}
\gamma_k=\Big\{ e^{i\h}\in \C_{z_3} \Big | \frac{2(k-1)\pi}{n+1}\leq \h\leq \frac{2k\pi}{n+1}\Big\}, \quad k=1,\dots,n, 
\end{equation} 
$\gamma_k$ is an arc between $\xi^{k-1}$ and $\xi^k$. 
Orient $\gamma_k$ so that its tangents point from 
$\xi^{k-1}$ to $\xi^k$. Then 
$\eS_k:=(\fr_0)^{-1}(\gamma_k)$ is a $\Theta$-invariant 
Lagrangian sphere, $\hat{\gamma}_k:=\eS_k\cap 
\Sigma_0$  a simple closed curve,  and  $\eS_k\cap \Sigma_\h=
\Theta_\h(\hat{\gamma}_k)$,  $\eS_k=\cup_{\h\in \R/\pi\Z}\Theta_\h(\hat{\gamma}_k)$. 
The union $\cup_{k=1}^n\eS_k$ 
is an $A_n$-string of Lagrangian spheres. 
$\cW_n$ is  an {\em $A_n$-surface singularity} as in \cite{Wu}. \\

\noindent
{\bf Proof of Theorem  \ref{M=Wn}} \

Let $(M,\omega,\Phi)$ be a connected and 1-connected exact SHam1-space with $n+1$ fixed points, $n\in \N\cup \{ 0\}$, and $X$ the 
Hamiltonian vector field associated to $h$. 
Let $S=S_1\cup S_2\cdots 
\cup S_n$ be an $A_n$-string of Lagrangian spheres in 
$h^{-1}(0)\subset (M,\omega, \Phi)$ associated with $(\Phi,h)$, and  
$\eS:=\eS_1\cup \eS_2\cdots\cup \eS_n$ the $A_n$-string of Lagrangian spheres in $(\cW_n,\omega_n, \Theta)$, say, induced from 
(\ref{stdS}). 
There exists a diffeomorphism $\phi:S\to \eS$ so that 
$\phi(S_i)=\eS_i$ for $i=1,\cdots,n$, and 
 $\Theta\circ\phi=\phi\circ \Phi$ on $S$. Identify $\cW_n
 =\cup_{i=1}^nT^*\eS_i=\cup_{i=1}^nT^*S_i$ 
 with the plumbing of $n$ copies of 
the cotangent bundle $T^*S^2$ of Lagrangian $2$-spheres. 
By Darboux theorem we may extend $\phi$ to a symplectomorphism, also denoted as $\phi$, $\phi:U\to \phi(U)\subset \cW_n$, 
from a neighborhood $U$ of $S_i\cup S_{i+1}$ to a neighborhood $\phi(U)$ of 
$\eS_i\cup\eS_{i+1}$ for each $i$. 
Since a diffeomorphism between two Lagrangian surfaces 
always extends to a symplectomorphism between 
their cotangent bundles, 
we can extend $\phi$ to a symplectomorphism $\phi:M \to \cW_n$, 
sending $T^*S_i$ to $T^*\eS_i$ so that $\phi^*\omega_n=\omega$.  
\hfill{$\Box$} \\


%

\noindent 
{\bf Proof of Theorem  \ref{symp-equiv}\  (i)}\ 
  
Recall that the $n=0$ case follows from Proposition  \ref{G=eG}, 
so we may assume that $n\geq 1$. 
With $\phi$ in the proof of Theorem \ref{M=Wn}  we may identify $(M, \omega, \Phi, h)$ 
with $(\cW_n,\omega_n, \Phi, h)$, 
so that $\Phi$ acts on $(\cW_n,\omega_n=d\alpha_n)$ as an $S^1$-group of Hamiltonian diffeomorphisms $\Phi_\h\in Ham(\cW_n,\omega_n)$, $\Phi_0=id$. \\

Both 
$\Phi$ and $\Theta$ actions preserve $\eS=\eS_1\cup\cdots \cup \eS_n$ with 
$Fix(\Phi)=Fix(\Theta)=\{ p_0,p_1,\cdots,p_n\}$, $p_0\in \eS_1$, $p_i=\eS_i\cap \eS_{i+1}$ for $i=1,\cdots, n-1$, and $p_n\in \eS_n$.  
Since both $\Phi$ and $\Theta$ act on each $\eS_i$ as rotations with 
$p_{i-1}$ and $p_i$ fixed, by modifying $\Phi$ if necessary we may assume that $\Phi_\h=\Theta_\h$ on $\eS$ for all $\h$. The Euclidean metric 
$g_0$ on $\C^3$ restricts to a $\Theta$-invariant, $\omega_n$-compatible 
Riemannian metric on $\cW_n$. Also let $g$ denote a $\omega_n$-compatible metric on $\cW_n$ which is also $\Phi$-invariant. 
We may assume that $g=g_0$ on $T_{\eS_i} \cW_n$ the tangent bundle of $\cW_n$ over $\eS_i$ for all $i$. Let $N(\eS_i)$ denote the normal 
bundle of $\eS_i\subset \cW_n$ determined by the metric $g_0$, which 
can also be identified with the cotangent bundle $T^*\eS_i$. Note that 
\begin{equation} \label{i-j} 
N_p(\eS_i)=T_p\eS_j, \qquad T_p\eS_i=N_p(\eS_j) \ \text{ at  $p=\eS_i\cap \eS_j$ when $|i-j|=1$}. 
\end{equation}

Recall that the action of $\Theta$ on $\cW_n$ is linear. Since $\Phi_\h=\Theta_\h$ 
on $\eS_i$, the linearized action $\Phi_*$ of $\Phi$ acts on $N(S_i)$ in the  same way as $\Theta=\Theta_*$ does. Denote $x_\h:=\Phi_\h(x)=\Theta_\h(x)$ and $v_\h:=(\Phi_\h)_*(v)=\Theta_\h(v)\in N_{x_\h}(\eS_i)$ for  $x\in \eS_i$, $v\in N_x(\eS_i)$.  We have   
for $v\in N_x(\eS_i)$ 
\[ 
(\Phi_\h)_*=\Theta_\h:N_x(\eS_i)\to N_{x_\h}(\eS_i), \quad v\to v_\h=\Theta_\h(v). 
\] 

At $p_i\in Fix(\Phi)$ we have 
\[ 
(\Phi_\h)_*=\Theta_\h:T_{p_i}\cW_n\to T_{p_i}\cW_n 
\] 
and the action preserves the splitting $T_{p_i}\cW_n=T_{p_i}\eS_i\oplus N_{p_i}(\eS_i)$. 
Below we consider exponential maps associated to $\eS\subset \cW_n$.
\\ 
 
%
%
%
For each $i$ 
consider the exponential map over $\eS_i$ relative to $g$, 
so that for $x\in \eS_i$ and $v\in N_x(\eS_i)$, 
\[ 
\fe_i:=\exp_i :\ N(\eS_i)\to \cW_n, \quad v\to \gamma_v(1)
\]  
where $\gamma_v(1)$ is the time $t=1$ image of the geodesic $\gamma_v(t)$  relative to $g$ with $\gamma_v(0)=x$ and $\dot{\gamma}_v(0)=v$.  \\ 

If $n>1$ then at each $p_i\in Fix(\Phi)$ with $1\leq i<n$ we also include  the exponential map 
\[ 
\fe_{p_i}:=\exp_{p_i}:T_{p_i}\cW_n\to \cW_n
\] 
 relative to any $\Phi$-invariant $\omega_n$-compatible Riemannian metric $g$: for $v\in T_{p_i}\cW_n$
\[ 
\fe_{p_i}(v):=\gamma_v(1)
\] 
is the time $t=1$ image of the geodesic $\gamma_v(t)$ relative to 
the metric $g$ with $\gamma_v(0)=p_i$ and  $\dot{\gamma}_v(0)=v$. \\

With (\ref{i-j}) the $\fe_{p_i}$'s and $\fe_i$'s piece together well to form a differentiable map  $\fe_\eS$ from the cotangent/normal  bundle 
\[ 
N(\eS):=\begin{cases} 
N(\eS_1) & \text{if  $n=1$}  \\ 
\cup_{i=1}^n N(\eS_i)\cup_{i=1}^{n-1}T_{p_i}\cW_n & \text{if $n>1$}
\end{cases} 
\] 
 into $\cW_n$. There exists $\epsilon>0$ so that the map 
\[ 
\fe_\eS:=\exp_\eS: \ V:=\{ v\in N(\eS)\mid |v|<\epsilon\} \to \cW_n
\] 
is a diffeomorphism from $V$ onto the open set $U:=\fe_\eS(V)$. 
We can also consider the exponential map $\fe^0_\eS:=\exp^0_\eS$ relative to the 
metric $g_0$, and for $v\in N(\eS)$ based at $x\in \eS$, we write $\gamma^0_v(t)$ as the geodesic relative to $g_0$ with $\gamma^0_v(0)=x$ and $\dot{\gamma}^0_v(0)=v$. \\

Since $\Phi_*=\Theta$  and $g=g_0$ on $T_\eS\cW_n$,  $\gamma_v$ and $\gamma^0_v$ are tangent at the base point $x$ of $v$. For 
$v\in N(\eS)$ denote $v_\h:=(\Phi_\h)_*(v)$, $v_0=v$, then 
\[ 
(\Phi_\h)_*(tv)=tv_\h, \quad 
(\Phi_{\h_1+\h_2})_*(v)=(\Phi_{\h_1})_*(\Phi_{\h_2})_*(v)=(\Phi_{\h_1})_*(v_{\h_2})=v_{\h_1+\h_2}. 
\]
As $g$ is $\Phi$-invariant, $\Phi_\h$ maps $g$-geodesics 
to $g$-geodesics, preserving the arc length. Also, 
$\Phi_\h=\Theta_\h$ when restricted to $\eS$, we have $\Phi_\h(\gamma_v)=
\gamma_{v_\h}$, and 
for $v\in N(\eS)$ based at 
$x\in  \eS$, the map $\fe_\eS$ takes $\gamma^0_v(1)$ to $
\gamma_v(1)$, and $\gamma^0_{v_\h}(1)$ to $\gamma_{v_\h}(1)$. \\ 

The domain $U=\fe_\eS(V)$ is $\Phi$-invariant since $V$ is $\Phi_*=\Theta$-invariant and $g=g^0$ on $V\subset N(\eS)$. Also we may identify 
$(V\subset N(\eS),\omega_n)$ with its image  $(\fe_\eS(V)=U, \omega_n)$.  Then on $U$
\[ 
\Phi=\fe_\eS\circ \Theta\circ \fe_\eS^{-1} . \label{Phi+} 
\]
Let 
\[ \omega':=(\fe_\eS^{-1})^*\omega_n=(\fe_\eS^{-1})^*\sum_{j=1}^3 dx_j\wedge dy_j=\sum_{j=1}^3 dx'_j\wedge dy'_j, 
\]
 where  $x'_j:=x_j\circ \fe_\eS^{-1}$ and $y'_j:=y_j\circ \fe_\eS^{-1}$. $\omega'$ is 
symplectic on $U$, invariant under $\Phi$-action, and the action of 
$\Phi$ on $U$ is linear with respect to $\omega'= \sum_{j=1}^3 dx'_j\wedge dy'_j$. 
Let the pair $(\eX,\eh)$ denote  the Hamiltonian vector field and corresponding Hamiltonian function associated to the 
$\Theta$-action, then 
\[ 
(X:=(\fe_\eS)_*\eX, \ h:=\eh\circ \fe^{-1}_\eS)\] 
 is the the corresponding Hamiltonian 
vector field and Hamiltonian function associated to the $\Phi$-action on $U$. We may extend $\fe_\eS$ to a diffeomorphism $\fe'_\eS:\cW_n\to \cW_n$  with $\Phi$-invariant compact support containing $U$. 
Let 
\[ 
\Phi':=\fe'_{\eS}\circ \Theta\circ {\fe'_{\eS}}^{-1}=p. 
\] 
$\Phi'$ acts as a SHam1--action  $\cW_n$, with $\Phi'=\Phi$ on $U$ and outside the compact support of $\fe'_\eS$. \\

Note that $(\fe_\eS)_*:=d\fe_\eS=id$ when 
restricted to $\eS$, so $\omega'=\omega_n$ on $\eS$. Both $\omega'=d\alpha'$ 
and $\omega_n=d\alpha_n$ are $\Phi$-invariant exact symplectic forms. 
 Up to averaging via the 
$\Phi$-action we may assume that both primitive 1-forms $\alpha_n$ 
and $\alpha'$ are $\Phi$-invariant. 
Consider the smooth family of differential forms $\omega_t:=(1-t)\omega_n+t\omega'$, $0\leq t\leq 1$. Recall that $\omega'=\omega_n$ on $\eS$. Then 
similar to the proof of Proposition \ref{G=eG} (ii), by shrinking $U$ if 
necessarily, we may assume that $\omega_t$ are symplectic on $U$ for all $0\leq t\leq 1$, and  there exists a smooth  isotopy 
$\phi_t:U\to M$, $t\in [0,1]$, with $\phi_0=id$, $\phi_t=id$ on $\eS$ 
for all $t$, and 
\[
\phi_1^*\omega'=\omega_n. 
\] 
%
%
Therefore, any effective SHam1-action on $(\cW_n,\omega_n)$ is 
linear near $\eS$ up to conjugation with a smooth isotopy $\phi_t$ with 
$\phi^{-1}_1\circ \Phi\circ\phi_1=\Theta$ on $U$. In addition $(U,\omega', \Phi)$ and $(V, \omega_n, \Theta)$ are 
symplectically equivariant via the symplectomorphism 
$\fe_\eS:(U,\omega')\to (V,\omega_n)$. 
\hfill{$\Box$} \\

In the following we will show that on $(\cW_n,\omega_n)$ with 
$n=0,1$, any semi-free SHam1-action, which we denote it as $\Phi$, up to conjugations is  linear and, following\ref{symp-equiv}(i), is symplectically equivariant to the $\Theta$-action on any prescribed 
relatively compact domain containing (i) the fixed point of the $\Theta$-action if $n=0$, and (ii) a $\Theta$-invariant Lagrangian sphere if 
$n=1$. \\

\noindent
{\bf Proof of Theorem  \ref{symp-equiv}\ (ii)(iii)} \\ 

\noindent {\bf (ii): $n=0$}.  We denote $q=(x,y)$ for $q\in (\C^2, \omega=\sum_{j=1}^2dx_j\wedge dy_j)$, where 
$x=(x_1,x_2)$, $y=(y_1,y_2)$, also $|x|=\sqrt{x_1^2+x_2^2}$, 
$|y|=\sqrt{y_1^2+y_2^2}$. \\ 

Recall that $\cW_0: \ z_1^2+z_2^2+z_3=1$. 
The embedding  
\[ \iota:\C^2\to \C^3, \quad \iota(z_1,z_2)=(z_1,z_2,1-z_1^2-z_2^2)
\] 
 is a symplectomorphism between $(\C^2,\omega=\sum_{j=1}^2dx_j\wedge dy_j)$ and its image $(\cW_0,\omega_0)$, $\cW_0=\iota(\C^2)$. Conversely the projection map $\pi:\cW_0\to \C^2$, $\pi(z_1,z_2,z_3)=(z_1,z_2)$ is the inverse 
symplectomorphism, $\pi\circ \iota=id_{\C^2}$, $\iota\circ \pi=id_{\cW_0}$,  $\pi^*\omega=\omega_0$. \\ 

Consider The Liouville vector field $Y:=\frac{1}{2}\sum_{j=1}^2 x_j\pa_{x_j}+y_j\pa_{y_j}$ on $\C^2$. 
The differential $\iota_*:T\C^2\to T\cW_0$ induces a Liouville vector 
field $Y_0$ on $\cW_0$ given by $Y_0:=\iota_*Y$. Both $Y$ and $Y_0$ are $\Theta$-invariant. Recall from Proposition \ref{G=eG} 
$\Phi$ and $\Theta$ are symplectically equivariant when restricted to some 
open neighborhood $U$ of the fixed point. Since $\C_2$ and $\cW_0$ 
are symplectomorphic $\Theta$-spaces, and any semi-free SHam1-action on $\cW_0$ descends to one in $\C^2$, 
so it suffices to compare $\Phi$ and $\Theta$ in $(\C^2,\omega)$. 
Then by Proposition \ref{G=eG} (ii) up to conjugation $\Phi$-action is 
 equivariant to $\Theta$-action and hence is linear on some open neighborhood 
$U_0$ of $0\in \C^2$, i.e., there exists a diffeomorphism $\fe_0:U_0\to U$ from a $\Theta$-invariant open neighborhood of $0\in \C^2$ onto a $\Phi$-invariant open neighborhood $U$ of $0\in \C^2$, $\fe_0(0)=0\in \C^2$, and 
\[ 
\Phi_\h=\fe_0\circ \Theta_\h\circ \fe_0^{-1} \quad \text{on $U$}, \quad\forall \h, 
\] 
or equivalently, 
\[ 
\Theta_\h=\fe_0^{-1}\circ \Phi_\h\circ \fe_0 \quad \text{on $U_0$}, \quad \forall \h. 
\]

Let $\rho:[0,\infty)\to [0,1]$ be a $C^\infty$-function so that 
$\rho'(t)\leq 0$ for $t\in [0,\infty)$, and for some 
$0<r_1<r_0<\infty$,  
$\rho(t)=1$ if $t\leq r_1$, $\rho(t)=0$ if $t\geq r_0$. 
Let $\tilde{\rho}:\C^2\to [0,1]$ be the $C^\infty$-function given by 
$\tilde{\rho}(q)=\rho(|q|^2)$. Let $\tilde{Y}:=\tilde{\rho}Y$ and $\psi_t$ be 
the smooth isotopy on $\C^2$ associated to $\tilde{Y}$, $\psi_0=id$, 
$\frac{d\psi_t}{dt}=\tilde{Y}\circ\psi_t$.  For $q=(x,y)\in \C^2$, 
\[ 
\psi_t(q)=(e^{\frac{t}{2}\tilde{\rho}(q)}x, e^{\frac{t}{2}\tilde{\rho}(q)}y). 
\] 
For $r>0$ and $r<r_1$ there exists $t_r<0$  such that 
whenever $|q|^2\leq r$, 
\[ 
\psi_{t_r}(x,y)=(e^{t_r/2}x, e^{t_r/2}y) \in U_0. 
\] 
Let $\Psi_r:=\fe_0\circ\psi_{t_r}$, 
then for $q=(x,y)$ with $|q|^2\leq r$, 
\begin{equation} 
\begin{split} 
(\Psi_r^{-1}\circ \Phi_\h\circ \Psi_r)(x,y) & =  \psi_{t_r}^{-1}\circ \fe_0^{-1}\circ \Phi_\h \circ 
\fe_0\circ \psi_{t_r}(x,y) \\ 
 & =  \psi_{t_r}^{-1}\circ \fe_0^{-1}\circ \Phi_\h \circ \fe_0(e^{t_r/2}x, e^{t_r/2}y) \\ 
  & = \psi_{t_r}^{-1}\circ \Theta_\h (e^{t_r/2}x, e^{t_r/2}y) \\ 
  & = \psi_{t_r}^{-1}(e^{t_r/2}x_\h, e^{t_r/2}y_\h) \\ 
  & = (x_\h,y_\h) = \Theta_\h(x,y) 
\end{split} 
\end{equation} 
where 
\begin{equation} 
\begin{split} 
x_\h & :=  (x_1\cos\h+x_2\sin\h, -x_1\sin\h+x_2\cos\h), \\ 
y_\h & :=  (y_1\cos\h+y_2\sin\h, -y_1\sin\h+y_2\cos\h).  
\end{split} 
\end{equation} 
So 
\[ 
\Psi_r^{-1}\circ \Phi\circ \Psi_r =\Theta  \quad \text{on } \{ (q=(x,y)\in \C^2 | |q|^2\leq r<r_1\}\subset \C^2.   
\] 

By allowing $r_1$ and $r_0$ to be arbitrarily large we 
conclude the that  any semi-free SHam1-action on $(\C^2,\omega)$ with $Fix_\Phi=\{ 0\}$ is 
linear up to conjugation and symplectically equivariant to the standard linear action of $\Theta$ on open 4-ball $B_r(0)=\{ q\in \C^2\mid |q|^2<r\}$ centered at $0$ with any prescribed radius $\sqrt{r}$.  This 
completes the proof of Case 1. 
\\

\noindent {\bf (iii): $n=1$}. \ 
Consider $(\cW_1,\omega_1)$ where $\cW_1: \{ z_1^2+z_2^2+z_3^2=1\}\subset \C^3$ is given by the equations 
\[ 
\sum_{j=1}^3x_j^2=1+\sum_{j=1}^3y_j^2, 
\quad \sum_{j=1}^3x_jy_j=0. 
\] 
For convenience we use the notations  $x:=(x_1,x_2,x_3)$, $y:=(y_1,y_2,y_3)$, $x^2:=\sum_{j=1}^3x_j^2$ and $y^2:=\sum_{j=1}^3y_j^2$. \\ 

$\cW_1$ is indeed the cotangent bundle of the sphere 
$S:=\{ \sum_{j=1}^3x_j^2=1\} \subset\R^3$ which is Lagrangian 
viewed as a submanifold of $(\cW_1,\omega_1)$, $\Theta$ acts on 
$S$ with two fixed points $(0,0,\pm 1)$.  Consider the vector field 
on $\cW_1$: 
\[ 
Y:=\sum_{j=1}^3y_j\pa_{y_j}. 
\] 
The 1-form 
\[ 
\alpha:=\iota_{Y}\omega_1=-\sum_{j=1}^3y_j\pa_{x_j}
\] 
satisfies $d\alpha=\omega_1$, 
is a $\Theta$-invariant primitive 1-form of $\omega_1$, 
so $Y$ is a Liouville vector field on $\cW_1$ which vanishes precisely 
on the Lagrangian sphere $S$. Moreover, $Y$ is invariant under the 
$\Theta$-action, which can be checked by direct computation.

\begin{rem} 
The vector field $Y:=\sum_{j=1}^3y_j\pa_{y_j}$ on $\cW_1$ is indeed 
the gradient vector field $\nabla f$ of the $C^\infty$-function 
$f(x,y)=\sum_{j=1}^3y_j\pa_{y_j}$ with respect to the Riemannian metric which is the pullback of the Euclidean metric on 
 $\C^3$ associated to the inclusion map $\iota:\cW_1\to \C^3$. 
\end{rem} 


It is known that all Lagrangian spheres in the cotangent bundle $T^*S^2$ are hamiltonian isotopic \cite{Hind}, so are those in $\cW_1$. 
Recall from proof of Theorem  \ref{symp-equiv}\ (i) that up to conjugation any semi-free SHam1-action on $\cW_1$ are symplectically equivalent to the standard $\Theta$-action when restricted to some open 
neighborhood  $U$ of the Lagrangian  sphere $S$. We may assume that 
$(x,y)\in U$ whenever $|y|:=\sqrt{y^2}<\delta$ for some $\delta>0$. 
\\ 

Let $\eta_t$ be the time $t$ 
map of the flow of $Y$ with $\eta_0=id_{\cW_1}$. $\eta_t$ is $\Theta$-invariant for all $t$. For $(x,y)\in \cW_1$ 
\[ 
\eta_t(x,y)=(e^sx,e^ty), \quad e^{2s}x^2=1+e^{2t}y^2, \quad s,t\in \R. 
\] 
Consider a 
$C^\infty$-function $\rho:[0,\infty)\to [0,1]$ such that 
$\rho'\leq 0$, and for some $0<r_1<r_0<\infty$, 
$\rho(r)=1$ if $0\leq r\leq r_1$,   $\rho(r)=0$ if $r_0\leq r$. let $\tilde{\rho}:\cW_1\to [0,1]$ be defined as 
$\tilde{\rho}(x,y):=\rho(|y|^2)$. Let $Y_\rho:=\tilde{\rho}Y$. \\ 

Let $\psi_t$ be the time $t$ map of the flow of $Y_\rho$. $\frac{d\psi_t}{dt}=Y_\rho(\psi_t)$. $\phi_0=id_{\cW_1}$. $\frac{d\psi_t}{dt}=0$ when restricted to $S$. $\psi_t$ is $\Theta$-equivariant for all $t$. For $(x,y)\in \cW_1$, 
\[ 
\psi_t(x,y)=(e^sx,e^{\rho(|y|^2)t}y), \quad e^{2s}x^2=1+e^{2\rho(|y|^2)t}y^2.  
\] 
For $r>0$ and $r<r_1$ there exists $t_r<0$  such that 
$\psi_{t_r}(x,y)\in U$ whenever $|y|\leq r$, 
\[ 
\psi_{t_r}(x,y)=(e^{s_r}x, e^{t_r}y) \quad e^{2s_r}x^2=1+e^{2t_r}y^2. 
\] 
\\
As in the proof of Theorem \ref{symp-equiv}(i)  we may assume that 
$\Phi=\Theta$ when restricted to the Lagrangian sphere $S: x^2=1, y=0$ 
in $\cW_1$, and consider the exponential map $\fe_S$ over $S$ relative to a 
$\Phi$-invariant $\omega_1$-compatible Riemannian metric $g$ on $\cW_1$: 
\[ 
\fe_S:=\exp_S:N(S)\to \cW_1, \quad \fe_S(v)=\gamma_v(1). 
\] 
There exists $\epsilon>0$ so 
\[ 
\fe_S:V=\{ v\in N(S)| |v|<\epsilon\}\to \cW_1
\] 
is a diffeomorphism from $V$ onto the open set $U:=\fe_S(V)$. 
Then on $U$ 
\[ 
\Phi_\h=\fe_S\circ \Theta_\h\circ (\fe_S)^{-1}, \quad \forall \h. 
\] 
Let $F_r:=\fe_S\circ \psi_{t_r}$. Similar to the $n=0$ case, we have 
\[ 
F_r^{-1}\circ \Phi_\h\circ F_r=\Theta_\h \quad \text{on $U_r:=\{ (x,y)\in \cW_1 |\  |y|^2\leq r\}$}. 
\] 

By allowing $r_1$ and $r_0$ to be arbitrarily large we 
conclude that any semi-free special Hamiltonian $S^1$-action on $(\cW_1,\omega_1)$ is linearly up to conjugation and symplectically 
 equivariant to the standard linear action of $\Theta$ on $U_r$ for any prescribed $r>0$.  This 
completes the proof of Theorem  \ref{symp-equiv}. 
\hfill{$\Box$} \\

\section{Non-simply connected cases} \label{nsimply}

Now  we consider the case that $(M,\omega)$ is a connected 
exact symplectic 
4-manifold with $c_1(M,\omega)=0$ but not 1-connected. 
Assume that $(M,\omega)$ equips with a 
semi-free special Hamiltonian $S^1$-action which we denote as $\Phi\subset \Ham(M,\omega)$. \\ 

Since the Maslov condition applies only when nonconstant orbits 
of $\Phi$ are homologically trivial,  we assume  that $Fix(\Phi)$ is nonempty and finite to accommodate the Maslov condition . Let 
\[ 
k=n+1:=|Fix(\Phi)|\geq 1
\]
denote  the number of fixed points of the 
$\Phi$-action, $h:M\to \R$ the moment map associated to $\Phi$, 
and $X=X_h$ the Hamiltonian vector field of $h$ defined by 
$\omega(X,\cdot )=-dh$. \\

Fix any $\lambda\in \Omega^1(M)$ with $d\lambda=\omega$. As in the 
1-connected case we may assume that $\lambda$ is $\Phi$-invariant 
and $Fix(\Phi)\subset h^{-1}(0)$, then  $\lambda$ satisfies $\lambda(X)=h$.  
recall  
\[
\fr_c:= h^{-1}(c)/\Phi, \quad \text{ also \ let} \quad \fr^{-1}_{[-c,c]}:= h^{-1}([-c,c])
\]
 associated to the $\Phi$-action on $M$. $h^{-1}(c)/\Phi$ is topologically an oriented surface 
of genus $g$ with $k$ marked points (which are the fixed points  of the 
$\Phi$-action) and $b\geq 1$ connected boundary components at infinity.  
Since $\Phi$ acts freely on $h^{-1}(c)$ for all $c\neq 0$, 
$\fr_c=h^{-1}(c)/\Phi$ is not 1-connected,  hence 
\[ 
g+b\geq 2, \quad g\geq 0, \ b\geq 1. 
\]

\noindent
{\bf Proof of Theorem \ref{pi1}} 

Recall that $\fr_c=h^{-1}(c)/\Phi$ and and $\fr_{c'}=h^{-1}(c')/\Phi$ are homeomorphic topological surfaces for $c,c'\in \R$. 
Let $B_0:=\fr_0$. 
Up to homotopy equivalence we may identify  $M$ with 
the corresponding $\Phi$-invariant subdomain $B=\fr^{-1}_{[-c,c]}B_0$
 for sone $c>0$,  $B$ is homotopic to $M$. \\ 

\noindent
{\bf Part 1: Handle decomposition of $B$ and Homology of $M$}. \ 
For each of the $k=1+n$ fixed points $p_i$, $0\leq i\leq n$, of the 
$\Phi$-action on $M$, denote by $q_i:=\fr_0(p_i)\in B_0$ 
the image of $p_i$ under the map $\fr_0:M_0\to M_0/\Phi=B_0$. In $B_0$ pick $n+1$ open discs  $D_i$ centered at $q_i$, $i=0,1,\dots,n$,  such that the closures of $D_i$'s are pairwise disjoint closed discs in $B_0$. 
 As $B_0$ can be constructed by attaching 
2-dimensional 1-handles to 2-dimensional 0-handles, we may view $D_i$ as the set of  
2-dimensional 0-handles  of $B_0$, and with $q_i$ as the  core point 
of $D_i$. Then $B_0$ can be obtained by attaching the following 
two types of 1-handles to $\cup_iD_i$:

\begin{enumerate} 
\item  {\em A disjoint union of $n$ 1-handles which connect $\cup_{i=1}^nD_i$ to $D_0$}:

Let   $\tau_i\subset B_0\setminus D_0$, $i=1,\dots,n$, be a set 
of pairwise disjoint embedded  arcs with endpoints  
such that $\tau_i$ is disjoint from $q_l$ if $l\neq i$, $q_i=\pa_+\tau_i$ is one endpoint of $\tau_i$, and $\tau_i$ intersects transversally with  
$\pa D_0$ at the other endpoint $\pa_-\tau_i$ of $\tau_i$. Thicken 
each of $\tau_i$ a bit to get $n$ mutually disjoint 2-dimensional 1-handles $H_{\tau_i}$  with 
$\tau_i$ as the core curve of $H_{\tau_i}$, $H_{\tau_i}\cap D_l=\emptyset$ if  $l\neq i$ or $0$, 
and $H_{\tau_i}\cap D_0$  is the end interval of $H_{\tau_i}$ which contains the endpoint $\pa_-\tau_i$ of $\tau_i$.   
WLOG we may assume  that that $D_i\cup H_{\tau_i}$ is 1-connected for $i=1,2,\dots,n$. 

Let 
\[ 
D:=D_0\cup (\cup_{i=1}^n \tilde{H}_{\tau_i}), \quad \tilde{H}_{\tau_i}:=H_{\tau_i}\cup D_i, 
\] 
$D$ is diffeomorphic to a 2-disc. Note that $Fix(\Phi)\subset \fr^{-1}(D)$,  
and $\fr^{-1}_{[-c,c]}(D)$ can be identified with a 1-connected 
Stein domain $W_n\subset \cW_n$. 

\item  {\em A disjoint union  $(2g+b-1)$ 1-handles  with both ends 
attached to $\pa D_0$, missing all of the handles  $H_{\tau_i}$ in (i)}: 

Denote these 1-handles as $H_{C_j}$ for $j=1,2,\dots, 2g+b-1$, where $C_j$ is the core arc of $H_{C_j}$. Denote the boundary of 
$H_{C_j}$ as $\pa H_{C_j}=\pa_+H_{C_j}\cup \pa_-H_{C_j}$, $H_{C_j}$ 
is attached to $\pa D$ along $\pa_{\pm}H_{C_j}$, where $\pa_{\pm}
H_{C_j}$ is 
the disjoint union of two short intervals containing $\pa C_j$. By handle sliding along $\pa D$ (and thinning $\tilde{H}_{\tau_i}$ and $H_{C_j}$  if necessary) we may assume that all $\pa H_{C_j}$  are attached to $\pa D_0$ and not touching any part of $\pa H_{\tau_i}$. 
\end{enumerate} 

Then the union 
\[
D\cup \Big(\sum_{j=1}^{2g+b-1}H_{C_j}\Big)=B_0
\] 
up to a diffeomorphism.  

Recall $\fr_0:h^{-1}(0)\to h^{-1}(0)/\Phi\cong B_0$ the standard projection. Fix a $\Phi$-invariant $\omega$-compatible Riemannian 
metric on $M$ and let $\nabla h$ denote the gradient vector field of 
the moment map $h$ with respect to the metric. 
$M$ deformation retracts to the hypersurface $h^{-1}(0)$ via flows of gradients $\pm \nabla h$ of  $h$, therefore $H_i(M,\Z)\cong H_i(h^{-1}(0), \Z)$ for all $i$.  \\ 

Now $h^{-1}(0)=\fr_0^{-1}(B_0)$. As $\Phi$  acts freely on $\fr_0^{-1}(B_0)\setminus  Fix(\Phi)$, we have the following results. 

\begin{enumerate} 
\item $\fr_0^{-1}(\tau_i)\subset h^{-1}(0)$ is Lagrangian disc centered at 
$p_i$, $1\leq i\leq n$.  If we extend $\tau_i$ to an embedded arc $\tilde{\tau}_i\subset  D$  with $q_i$ and $q_0$ as endpoints, then $\tilde{S}_i:=\fr_0^{-1}(\tilde{\tau}_i)$ is a Lagrangian 2-sphere with 
points $p_i$  and $p_0$  as poles for $i=1,2,\dots n$. These $n$ spheres 
are homologically independent in $\fr^{-1}_0(D)\subset h^{-1}(0)$. Indeed $\fr^{-1}_0(D)$ is homotopic to the bouquet 
$\cup_{i=1}^n \tilde{S}_i$ of $n$ 2-spheres. 

\item $\fr_0^{-1}(C_j)\subset h^{-1}(0)$ is Lagrangian cylinder with 
boundary attached to $\fr^{-1}_0(\pa D)$. If we extend $C_j$ to  a simple closed curve $\tilde{C}_j\subset  D\setminus \{ q_i\mid i=0,\dots n\}$ then we get an embedded Lagrangian torus $\tilde{T}_j \subset h^{-1}(0)=\fr^{-1}_0(B_0)$ for $j=1,2,\dots, 2g+b-1$. These curves 
$\tilde{C}_j$, $j=1,\dots,2g+b-1$, together represent   a basis of the homology group $H_1(h^{-1}(0))=H_1(B,\Z)\cong \Z^{2g+b-1}$. As $\Phi$-orbits 
in $h^{-1}(0)$ contracts to points in $Fix(\Phi)$, 
$H_1(h^{-1}(0),\Z)\cong H_1(B,\Z)$ and is generated by a lifting of 
the curves  $\tilde{C}_j$  in $\tilde{T_j}$, $j=1,\dots, 2g+b-1$. 
The tori $\tilde{T}_j$'s  are 
also independent on $H_2(h^{-1}(0),\Z)$.   Also $\tilde{T_j}$'s and $\tilde{S}_i$'s are pairwise linearly independent in $H_2(h^{-1}(0),\Z)$, 
and forming a basis for $H_2(h^{-1}(0),\Z)\cong \Z^{2g+b-1+n}$. 
Therefore 
 \[
 H_m(M,\Z)=
 \begin{cases}  0 & \text{ for } m\neq 0,1, 2, \\ 
 \Z^{n+2g+b-1}= H_2(\cup_{i=1}^n\tilde{S}_i,\Z)\oplus \Z^{2g+b-1} & \text{ for } m= 2, \\ 
 \Z^{2g+b-1} & \text{ for } m=1, \\ 
\Z & \text{ for } m=0. 
\end{cases} 
\] 
In particular the homology groups of $(M,\omega,\Phi)$ is completely determined by the triple $(k=n+1, g, b)$ with $k\geq 0$, $g\geq 0$, $b\geq 1$,  and $g+b\geq 2$. 

\end{enumerate}

\noindent
{\bf Parr 2: Stein structure on $(M,\omega=d\lambda, \Phi)$} 
 
Below we study the existence of Stein structure on $(M,\omega=d\lambda, \Phi)$. 
Recall the following theorem due to  Eliashberg \cite{E1}  (see also \cite{Gompf1} Theorem 1.3 and \cite{Gompf2} Theorem 2.3) 
about the existence of a 
Stein structure on a 4-manifold via handle attaching criteria, which 
can be stated as the following after Gompf (\cite{Gompf2} Theorem 2.3):

\begin{theo}[Eliashberg.]  \label{twist} 
An oriented 4-manifold admits a Stein structure if and only if it is diffeomorphic to the interior of a handlebody whose handles all have index $\leq$ 2, and for which each 2-handle is
attached along a Legendrian knot (in the standard contact structure on the relevant boundary 3-manifold)
with framing obtained from the contact framing by adding one left twist.
\end{theo}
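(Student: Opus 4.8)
The plan is to prove both implications through Morse theory applied to a strictly plurisubharmonic exhaustion, matched against a handle-by-handle construction. For necessity, I would assume $(M,J)$ carries a Stein structure and choose a strictly plurisubharmonic exhaustion $\phi:M\to\R$, which after a $C^\infty$-small perturbation I may take to be Morse. Positivity of the Levi form of $\phi$ bounds the number of negative eigenvalues of its Hessian by the complex dimension, so every critical point has Morse index $\leq 2$; the associated handle decomposition therefore uses only handles of index $\leq 2$. Each regular level set $\Sigma_c=\phi^{-1}(c)$ is $J$-convex and hence inherits the contact structure $\xi_c=T\Sigma_c\cap J\,T\Sigma_c$ of complex tangencies. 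The decisive local analysis is at an index-$2$ critical point $p$: the $2$-dimensional core disc of the handle meets a nearby level set in a circle $K\subset\Sigma_c$, the attaching sphere. Using $J$-convexity one first isotopes $K$ into Legendrian position for $\xi_c$, and then computes that the framing with which the handle is attached along $K$ equals the contact (Thurston--Bennequin) framing minus one, i.e. $\mathrm{tb}(K)-1$---precisely the contact framing plus one left twist.

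For sufficiency, I would reconstruct the Stein structure handle by handle. The $0$- and $1$-handles are treated first: a $0$-handle is the standard Stein ball $(B^4,J_0,\phi_0=|z|^2)$, and a $1$-handle is subcritical, so attaching the standard subcritical Stein handle model extends the plurisubharmonic function across it while keeping the new boundary $J$-convex. The essential step is the $2$-handle attachment. Here I would exhibit a standard Stein model of a $4$-dimensional $2$-handle, equipped with a plurisubharmonic function whose attaching region meets the incoming $J$-convex boundary along a Legendrian knot and whose built-in framing is exactly the contact framing minus one. Since by hypothesis each $2$-handle is attached along a Legendrian knot with framing $\mathrm{tb}-1$, this model glues in, matching the $J$-convex boundaries and extending the exhaustion; iterating over all $2$-handles produces the desired Stein structure on the interior of the handlebody.

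The hard part will be the $2$-handle step, in both directions, where the complex structure and the contact geometry of the attaching circle must be reconciled. In the necessity direction the content is the framing computation showing that plurisubharmonicity forces exactly the $-1$ shift relative to the Thurston--Bennequin framing; in the sufficiency direction it is the construction of a local Stein handle that can be glued along an arbitrary Legendrian knot with framing $\mathrm{tb}-1$ while keeping the Levi form positive. Underlying both is the flexibility statement that an attaching circle in a $J$-convex boundary may be $C^0$-isotoped into Legendrian position, which is what lets the purely topological handle data be realized holomorphically; this is the technical heart of the argument.
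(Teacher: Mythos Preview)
The paper does not prove this theorem at all: it is quoted as a result of Eliashberg (with references to \cite{E1}, \cite{Gompf1}, \cite{Gompf2}) and then applied as a black box in the proof of Theorem~\ref{pi1}. So there is no ``paper's own proof'' to compare your proposal against.

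As a sketch of the original argument your outline is broadly on target, but a couple of points deserve tightening. In the necessity direction you say one first isotopes the attaching circle $K$ into Legendrian position and then computes the framing shift; in fact the descending disc of an index-$2$ critical point of a strictly plurisubharmonic Morse function is automatically isotropic for the associated K\"{a}hler form, so its boundary in a nearby level set is already Legendrian---no isotopy is needed there, and the $\mathrm{tb}-1$ framing then follows from a direct local-model computation. The genuine flexibility input (Legendrian realization of a given smooth knot) belongs only to the sufficiency direction, where you must show that the topological attaching data can be realized by a Legendrian knot before gluing in Eliashberg's model $2$-handle. Your proposal correctly identifies the construction and gluing of that model handle as the technical core; filling this in is essentially reproducing \cite{E1}, and is well beyond what the present paper attempts.
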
 
 
Here  a 4-dimensional 2-handle $\eH$ is diffeomorphic to a product space modeled on   $D'\times D''_\epsilon \subset \R^2_{x_1,x_2}\times i\R^2_{y_1,y_2}\subset \C^2$, where $D'=\{ x_1^2+x_2^2\leq 1\}$, called  the (Lagrangian) core disc of $\eH$,  and $D''=\{ y_1^2+y_2^2<\epsilon\}$  for some 
$\epsilon >0$. $\eH$ can be viewed as a trivial $D^2$-bundle 
over $D'$. The basis normal vector fields $\{ \pa_{y_1}, \pa_{y_2}\}$ over $D'$, when restricted to $\pa D'$, gives the canonical framing of the 
symplectic normal bundle of $\pa D'\subset \eH$. In particular, we can 
take either $\pa_{y_1}$ or  $\pa_{y_2}$ as the framing of the $D^2$-bundle over 
$\pa D'$. \\

Below we will show that 
the handle decomposition of $B_0=D_0\cup (\cup_{i=1}^n \tilde{H}_{\tau_i})\cup (\cup_jH_{C_j})$ lifts to a corresponding handle decomposition of 
$M$ as a union of handles 
\[ 
M=\eH_0\cup (\cup_i \eH_{V_i})\cup (\cup_j(\eH_{\gamma_j}\cup\eH_{U_j})) 
\] 
where $\eH_0=\fr^{-1}(D_0)$ is  a 0-handle, $\eH_{V_i}$ are 
2-handles associated to $\tilde{H}_{\tau_i}$,  and for each $j$, 
the pair $(\eH_{\gamma_j}, \eH_{U_j})$, where $\eH_{\gamma_j}$ is a 
1-handle and $\eH_{U_j}$ is a 2-handle, is associated to $H_{C_j}$. 
Moreover,  all the 2-handles $\eH_{V_i}$ and $\eH_{U_j}$ of $M$ 
satisfy the contact framing condition, hence $M$  admits the structure 
of a  Stein manifold. \\

{\bf Case 1: $\eH_{V_i}$.} \ 
Denote by  $V_i:=\fr^{-1}_0(\tau_i)$ the Lagrangian disc in $h^{-1}(0)$ 
with the boundary circle $\pa V_i$ attached to $\fr^{-1}_0(\pa D_0)$. $V_i$ is the core disc of a 4-dimensional 
handle $\eH_{V_i}$ which can be identified with a subdomain of the cotangent bundle over $V_i$. We may assume that these handles are mutually disjoint when attached to $\fr^{-1}(\pa D_0)$.  \\

 Pick any $V_i$ and denote it as $V$, 
also denote  $\tau:=\tau_i=\fr(V)$, $p:=p_i$.  
We may parametrize 
$V$  as 
\[ 
V=\{ (x_1,x_2)\mid x_1^2+x_2^2\leq 1\}, \quad p=(0,0)\in Fix(\Phi)
\] 
and identify 
\[ 
X|_V=x_1\pa_{x_2}-x_2\pa_{x_1}. 
\] 
Let  $(y_1,y_2)$ be the fiber coordinates of the cotangent 
bundle $T^*V$ dual to $(x_1,x_2)$. Then $V$ can 
be identified with the core disc of the 4-dimensional 2-handle 
\[ 
\eH_V:=\fr^{-1}_0(H_\tau)\cong V\times D^2_y\subset T^*L, \quad 
D^2_y=\{ (y_1,y_2)\mid y_1^2+y_2^2< \epsilon^2\} 
\] 
for some $0<\epsilon$, 
with 
$\pa_-\eH_V:=\pa V\times D^2_\epsilon$ attached to $\fr^{-1}(\pa D_0)\subset \pa W_0$, where $W_0=\fr^{-1}(D_0)$. 

Identify $\eH_V$ as a subdomain of $\C^2$. Then on $\eH_V$ we may 
take $\omega=d\lambda$, with  $\lambda=\frac{1}{2}\sum_{i=1}^2(x_idy_i-y_idx_i)$,  identify the $\Phi$-action with the standard $\Theta$-action on $\C^2$, and take the Euclidean metric as the $\Phi$-invariant 
$\omega$-compatible Riemannian metric on $\eH_V$. Then 
\[ 
\nabla h|_{\pa V}=-x_1\pa_{y_2}+x_2\pa_{y_1}. 
\]

Note that both $X$ 
and $\nabla h$ are tangent to $\fr^{-1}(\pa D_0)$ and hence to 
$\pa_-\eH_V$. 
The 1-form $\lambda$ restricts to a contact 1-form $\alpha$ on $\pa V\times D^2_\epsilon$. $\lambda(X)=0$ along $\pa V$
so $\pa V$ is a Legendrian curve to the contact form.  
As the outward normal vector field 
to $\fr^{-1}(D_0)=W_0$ near  $\pa V$ is $-x_1\pa_{x_1}-x_2\pa_{x_2}\subset \ker\lambda |_{\pa V}$, the contact plane field along $\pa V$ is 
spanned by the symplectic pair $\{ \nabla h, X\}$.  So the contact 
framing along $\pa V$ (oriented by $X$) is given by the vector field 
$\nabla h|_{\pa V}=-x_1\pa_{y_2}+x_2\pa_{y_1}$. \\

Identify $V$ as the 0-section of the cotangent bundle $T^*V\supset \eH_V$. 
The normal bundle $N_V$ of $V\subset T^*V$ is trivial with 
$\{ \pa_{y_1}, \pa_{y_2}\}$ as the basis fields of $N_V$. 
$N_V$ restricted to $\pa V$  is the symplectic normal bundle 
$SN^*(\pa V)=\pa_{y_1}\wedge \pa_{y_2}|_{\pa V}$ of $\pa V$ with the natural trivialization/framing 
given by $\pa_{y_1}$. \\ 

$SN^*(\pa V)$ is then identified with the normal bundle 
$N_{\pa V/\pa W_0}$ of $\pa V\subset \pa W_0$ 
upon the attaching of $\pa_-\eH_V=\pa V\times D^2_y$ to $\pa W_0$ 
along $\pa V$. 
As we go once along $\pa V$ the 
contact framing $\nabla h$ makes one positive full rotation relative to the 
canonical framing $\pa_{y_1}$. In other words, the canonical framing 
$\pa_{y_1}$  along $\pa V$ is obtained from the contact framing by adding one left twist. So the natural framing along 
$\pa V$ given by $\pa_{y_1}$ is $-1$ 
relative to the contact framing along $\pa V$, which meets with 
the Stein condition on the attachment of  4-dimensional 2-handles. \\ 

The above result on the framing criteria along $\pa V\subset \pa_-\eH$ apply to $V_i$ for all $i=1,2,\dots,n$ as well. Each of the 2-handlebodies $\eH_{V_i}$ has its core disc  
$V_i$ 
attached along a Legendrian knot (in the standard contact structure on the relevant boundary 3-manifold)
with framing obtained from the contact framing by adding one left twist.
In particular, $\eH_0\cup (\cup_i \eH_{V_i})$ admits a Stein structure by Theorem \ref{twist}. \hfill{$\Box$} \\

{\bf Case 2: $\eH_{C_j}$.} \ 
Pick any one of the 2-dimensional handles $H_{C_j}$ and denote it 
as $H_C$, where $C$ is the core curve of $H_C$. 
We will see that $\fr^{-1}(H_C)$ can be identified with the union of a 4-dimensional 1-handle $\eH_\gamma$  with 
core curve $\gamma \subset h^{-1}(0)$ and a 4-dimensional 
2-handle $\eH_U$ with core disc $U\subset h^{-1}(0)$, 
 and $\pa U$ is  attached to $\pa \eH_\gamma\cap h^{-1}(0)$. \\

The preimage $L:=\fr^{-1}_0(C)\subset h^{-1}(0)$ is a 
Lagrangian annulus $L\cong I\times S^1$  foliated by $G$-orbits 
generated by the Hamiltonian vector field $X$. We may parametrize 
$L$  as 
\[ 
L=\{ (x_1,x_2)\in \R\times \R/2\pi\Z \mid -1\leq x_1\leq 1, x_2\in \R\} 
\] 
so that $X=\pa_{x_2}$, and each integral curve of $\pa_{x_1}$ is 
a lifting of $C$ in $L$. The boundary of $L$, which is the pair of 
$\Phi$-orbits given by the equations $x_1=\pm 1$ is attached to $\pa W_n$. Let 
\[ 
\gamma:=\{ (x_1,x_2)\mid -1\leq x_1\leq 1, \ x_2=0 \}\subset L. 
\] 
$\gamma$ is a lifting of $C$ in $h^{-1}(0)$. \\

Let  $(y_1,y_2)$ be the fiber coordinates of the cotangent 
bundle $T^*L$ dual to $(x_1,x_2)$. Then $\gamma$ can 
be identified with the core curve of the 4-dimensional 1-handle 
(a product of four intervals) 
\[ 
\eH_\gamma:=\fr^{-1}(H_C)\cong \gamma\times I^3\subset T^*L, \quad 
I^3= I_{|y_1|<\epsilon}\times I_{|x_2|<\epsilon}\times I_{y_2}
\] 
for some $0<\epsilon\ll 1$, 
with 
$\pa_-\eH_\gamma:=\pa\gamma\times I^3$ attached to $\pa M_0$. 
Here 
$y_2$ parametrizes integral curves of $-\nabla h$, 
the negative  gradient vector field of 
$h$ with respect to some $G$-invariant $\omega$-compatible Riemannian metric on $M$. 
Let 
\[ 
I_x:=\gamma\times I_{|x_2|<\epsilon}=\{ (x_1,x_2)\mid -1\leq x_1\leq 1,\  -\epsilon <x_2<\epsilon\}\subset L.  
\] 
Up to a smoothing of the corners, the complement 
\[ 
U:=L\setminus I_x  = [-1, 1]\times [\epsilon , 2\pi-\epsilon]\subset L
\] 
is a Lagrangian disc 
in $h^{-1}(0)$ with 
boundary attached to $\pa W_n\cap h^{-1}(0)$. 
Observe that $U$ is the core 2-disc of the handle 
\[ 
\eH_U:=U\times I_{|y_1|<\epsilon}\times I_{y_2}, 
\] 
with $\pa_-\eH_U=\pa U \times I_{|y_1|<\epsilon}\times I_{y_2}$ attached to $\pa_+\eH_\gamma=\gamma\times \pa I^3$ so that 
 \[ 
 \eH_\gamma\cup \eH_U=T^*L\cap \{ |y_1|<\epsilon\}. 
\]

Orient  
$\pa U=\gamma_-\cup \sigma_+\cup \gamma_+\cup \sigma^-$  
counterclockwise, where 
\begin{alignat}{2}
\gamma_-  & =  \{ -1\leq  x_1\leq 1, \ x_2=\epsilon\}, & \quad  \dot{\gamma}_- & =\pa_{x_1}, \\ 
\sigma_+  & =  \{ x_1=1, \ \epsilon \leq x_2\leq 2\pi-\epsilon\}, & 
\quad  \dot{\sigma}_+ & =\pa_{x_2}=X, \\ 
\gamma_+   &= \{ -1\leq  x_1\leq 1, \ x_2=2\pi-\epsilon\}, & \quad  
\dot{\gamma}_+ & =-\pa_{x_1},   \\ 
\sigma_-  & =   \{ x_1=-1, \ -\epsilon \leq x_2\leq 2\pi-\epsilon\}, & 
\quad  
\dot{\sigma}_- & =-\pa_{x_2}=-X. 
\end{alignat} 

Recall  the $\Phi$-invariant primitive 1-form $\lambda$ of $\omega$. 
Observe that $\fr^{-1}(\pa D)\subset \pa W_n$ is a subdomain of 
$\pa W_n$, 
$\lambda$ restricted to $\pa W_n$ is a contact 1-form  near  $\pa L$, 
 whose contact structure along $\pa L$ is  spanned by the symplectic pair $\nabla h$ and $X$. 
 So along $\sigma_+$ the contact structure is $X\wedge (-\nabla h)=\pa_{x_2}\wedge \pa_{y_2}$, and along $\sigma_-$ the contact structure is $-X\wedge \nabla h=-\pa_{x_2}\wedge( -\pa_{y_2})$. 
 Along the boundary $(\pa_+\eH_\gamma)\cap L=\gamma_-\cup \gamma_+$ 
 the vector field $X$ is normal to $\pa_+\eH_\gamma$,  hence along $\gamma_-$ the contact structure  on $\pa_+\eH_\gamma$ is $\pa_{x_1}\wedge \pa_{y_1}$, spanned by the symplectic pair 
 $\dot{\gamma}_-=\pa_{x_1}$ and $\pa_{y_1}$.  Similarly along $\gamma_+$ the contact structure  on $\pa_+\eH_\gamma$ is $(-\pa_{x_1})\wedge (-\pa_{y_1})$, 
 spanned by 
 the symplectic pair $\dot{\gamma}_+=-\pa_{x_1}$ and $-\pa_{y_1}$. \\

 By smoothing the corners of $U\subset L$ we may identify $U$ 
 with a unit disc $D_U=\{ x_1^2+x_2^2\leq 1\}$ with boundary 
 $\pa D_U$ attached to $\pa(\eH_\gamma\cup W_n)$, so that along the 
 legendrian circle $\ell:=\pa D_U= S^1=\R/2\pi\Z$,  
 \[ 
 \begin{split} 
 \dot{\ell}(\h) & =\cos\h\pa_{x_2}-\sin\h \pa_{x_1},\quad \ell(0)=(1,0) \\ 
  & =x_1\pa_{x_2}-x_2\pa_{x_1}, 
  \end{split}  
 \] 
and the  vector field 
\[ 
\cos\h \pa_{y_2} -\sin\h\pa_{y_1} =x_1\pa_{y_2}-x_2\pa_{y_1} 
\] 
is the contact framing along $\ell$. \\ 

Similar to Case 1, the framing of the symplectic normal bundle $SN^*(\pa U)$ (which is spanned by the 
basis vector fields $\{ \pa_{y_1}, \pa_{y_2}\}$) is given by $\pa_{y_1}$. As we go once along $\ell$ the 
contact framing makes one positive full rotation relative to  $\pa_{y_1}$. So the canonical framing $\pa_{y_1}$ is $-1$ 
relative to the contact framing along $\ell$.  This result applies to all 
$\eH_{C_i}$. 
Combining the  result from Case 1 we conclude that   $M$ admits 
a Stein structure following Theorem \ref{twist}. 
This completes the proof of Theorem \ref{pi1}. \hfill{$\Box$}


\begin{thebibliography}{99}





\bibitem{DelMel} M. Dellnitz, I. Melbourne, {\em The equivariant Darboux theorem}. Exploiting symmetry in applied and numerical analysis (Fort Collins, CO, 1992), 163--169, Lectures in Appl. Math., 29, Amer. Math. Soc., Providence, RI, 1993. 


\bibitem{E1} Y. Eliashberg, {\em Topological characterization of Stein manifolds of dimension $>2$},  Internat. J. Math.  1  (1990),  no. 1, 29--46.


\bibitem{Gompf1} R. E. Gompf, {\em Handlebody construction of Stein surfaces},  Ann. of Math. (2) 148 (1998), no. 2, 619--693. 



\bibitem{Gompf2}  R. E. Gompf, {\em 
Constructing Stein manifolds after Eliashberg}. New perspectives and challenges in symplectic field theory, 229--249,
CRM Proc. Lecture Notes, 49, Amer. Math. Soc., Providence, RI, 2009.


\bibitem{GomStip} R. Gompf, A. Stipsicz, {\em 4-manifolds and Kirby calculus}, Graduate Studies in
Math. 20, A.M.S. (1999)

\bibitem{GuillStern} 
V. Guillemin, S. Sternberg, {\em Symplectic techniques in physics.} Cambridge University Press, Cambridge, 1984. xi+468 pp. 

\bibitem{Hind} R. Hind, {\em 
Lagrangian unknottedness in Stein surfaces}. 
Asian J. Math. 16 (2012), no. 1, 1--36.

\bibitem{Ka} Y. Karshon, {\em Periodic Hamiltonian flows on four-dimensional manifolds}. Mem. Amer. Math. Soc. 141 (1999), no. 672, viii+71 pp.


 \bibitem{KaTo1} Y. Karshon, S. Tolman,{ \em Centered complexity one Hamiltonian torus actions.} Trans. Amer. Math. Soc. 353 (2001), no. 12, 4831--4861. 

\bibitem{KaTo2} Y. Karshon, S. Tolman, {\em Complete invariants for Hamiltonian torus actions with two dimensional quotients}. J. Symplectic Geom. 2 (2003), no. 1, 25--82. 

 \bibitem{KaTo3} Y. Karshon, S. Tolman, {\em Classification of Hamiltonian torus actions with two-dimensional quotients}. Geom. Topol. 18 (2014), no. 2, 669--716.

\bibitem{KaTo4} Y. Karshon, S. Tolman, {\em Topology of complexity one quotients}. Pacific J. Math. 308 (2020), no. 2, 333--346. 



\bibitem{MS} D. McDuff, D. Salamon, {\em Introduction 
to Symplectic Topology}, 3rd ed..  Oxford Mathematical Monographs,  
2017. 

\bibitem R. Sjamaar, {\em 
A de Rham theorem for symplectic quotients}. 
Pacific J. Math. 220 (2005), no. 1, 153--166.

\bibitem{W} 
 A. Weinstein, {\em Lectures on symplectic manifolds}. Corrected reprint. CBMS Regional Conference Series in Mathematics, 29. American Mathematical Society, Providence, R.I., 1979.


\bibitem{Wu} W. Wu, {\em Exact Lagrangians in $A_n$-surface singularities}. Math. Ann. 359 (2014), no. 1-2, 153--168.

\vspace{.1in } 
Department of Mathematics, National Central University,  Zhongli, Taiwan. 

{\em Email address}: yau@math.ncu.edu.tw



\end{thebibliography}
\end{document}